\newtheorem{thm}{Theorem}[section]
\newtheorem{lemma}[thm]{Lemma}
\newtheorem{prop}[thm]{Proposition}
\numberwithin{equation}{section}
\renewcommand{\thefootnote}
\newcommand{\set}[1]{\left\{#1\right\}}
\author{B\'{e}chir Amri}
\date{\small
University of Tunis, Preparatory Institut of Engineer
Studies of Tunis, Department of Mathematics,
1089 Montfleury Tunis, Tunisia\\ bechir.amri@ipeit.rnu.tn }
\title{Riesz transforms for Dunkl Hermite expansions}
\begin{document}

\maketitle
\begin{abstract}
In the present paper,  we establish that   Riesz transforms  for Dunkl Hermite expansions as introduced in \cite{NS} are
singular integral operators with H\"{o}rmander's  type conditions  and we show that they are  bounded on $L^p(\mathbb{R}^d, d\mu_\kappa)$,  $1<p<\infty$.
 \footnote{\par\textbf{Key words and phrases:} Singular
integrals, Dunkl operators,  Hermite polynomials.
\par\textbf{2010 Mathematics Subject Classification:}  42B20,  43A32, 33C45.
\par Author  partially supported by DGRST project 04/UR/15-02 and CMCU program 10G 1503.}

\end{abstract}
\section{Introduction.}
In \cite{NS} the authors introduced the Riesz transforms  related to the Dunkl harmonic
oscillator  $L_\kappa$ and  they  proved that when the group of reflections is isomorphic to $\mathbb{Z}_2^d$ such operators are $L^p$ bounded
 with $ 1<p<\infty$.
The aim of this paper is to present
an extension of this result to general group of reflections  in arbitrary dimensions.
Our approach consists in the application of the standard theory of Calder�n-Zygmund operators.
The setting, which
is described in more details in Section 2, is as follows:
Let $R$ be a (reduced) root system on $\mathbb{R}^d$,  normalized so that $<\alpha,\alpha> =|\alpha|^2= 2$ for all $\alpha\in R$, where $\langle.,.\rangle$ denotes
 the usual Euclidean inner product and |.| its induced norm. Let $G$ be the associated reflection group and  $\kappa:\; R \rightarrow [0,+\infty[$
 be a $G$-invariant  function on $R$. The Dunkl operators
$T_j^\kappa$, $1\leq j\leq d$,  associated with $R$ and $\kappa$, which were introduced in \cite{Du}, are given by
\begin{eqnarray*}
T_j^\kappa(f)(x)=\frac{\partial f}{\partial x_j}(x)+\sum_{\alpha\in R_+}\kappa(\alpha)\alpha_j\frac{f(x)-f(\sigma_\alpha.x)}{\langle x, \alpha\rangle},\qquad x\in\mathbb{R}^d,\; f\in \mathcal{C}^1(\mathbb{R}^d)
 \end{eqnarray*}
where  $R_+$ denotes a positive subsystem of $R$ and $\sigma_\alpha$  the reflection in the hyperplane orthogonal to $\alpha$ ie:
\begin{eqnarray}\label{sigma}
\sigma_\alpha(x)=x-\langle x,\alpha\rangle\alpha, \qquad x\in\mathbb{R}^d.
\end{eqnarray}
The definition is of course independent of the choice of $R_+$ since $\kappa$ is G-invariant.
 We introduce the measure $d\mu_\kappa(x)=w_\kappa(x)dx$ where
\begin{eqnarray}\label{mu}
 w_\kappa(x)=\prod_{\alpha\in R_+}|\langle\alpha,x\rangle|^{2\kappa(\alpha)},\qquad x\in\mathbb{R}^d
\end{eqnarray}
which is $G$-invariant and homogenous of degree $2\gamma_\kappa$,
$$\gamma_\kappa=\sum_{\alpha\in R+}\kappa(\alpha).$$
\par The Dunkl harmonic oscillator is the operator  $L_\kappa= -\Delta_\kappa+|x|^2$ where $\Delta_\kappa $ denotes the Dunkl  Laplacian operator
$\displaystyle{ \Delta_\kappa=\sum_{j=1}^d (T_j^\kappa)^2}$. In particular,  the operator $ L_\kappa$ can be written as
 $\displaystyle{  L_\kappa=\frac{1}{2}\sum_{j=1}^d(\delta_j^\kappa \delta_j^{\kappa^*}+ \delta_j^{\kappa^*}\delta_j^\kappa)}$,
where
$\delta_j^\kappa=T_j^\kappa+x_j$ and $\delta_j^{\kappa^*}=-T_j^\kappa+x_j$. The Riesz transforms  related to the Dunkl harmonic
oscillator  are defined as  natural generalizations  of the classical ones ( see \cite{Th}) by
\begin{eqnarray*}
R_j^\kappa = \delta_j^\kappa L_\kappa^{-\frac{1}{2}}, \quad 1\leq j\leq d.
\end{eqnarray*}
The $L^2$ boundedness  of  these operators can be easily obtained  from the  Dunkl Hermite expansions.  Closely related to the integral operators
the  key new ingredient leading to $L^p$ boundedness   is the following:
\begin{thm}\label{th1}
 Let $T$ be a bounded operator on $L^2(\mathbb{R}^d,d\mu_\kappa)$   and $K$ be a measurable function on $
 \mathbb{R}^d\times \mathbb{R}^d\setminus\set{(x,g.x);\; x\in\mathbb{R}^d,\; g\in G}$  such that
 \begin{equation}\label{sing}
 T(f)(x)=\int_{\mathbb{R}^N}K(x,y)f(y)d\mu_\kappa(y),
 \end{equation}
    for all compactly supported $f$ in $L^2(\mathbb{R}^d,d\mu_\kappa)$ with
    $supp(f)\cap G.x =\emptyset$. If $K$ satisfies  H\"{o}rmander's  type conditions: there exists
a positive constant $C$  such that for all $\;y , y_0 \in \mathbb{R}^d,$
\begin{eqnarray}\label{ker} \int_{\min_{g\in G}|g.x-y|>2|y-y_0|}  |K(x,y)-K(x,y_0)|\;d\mu_\kappa(x) &\leq& C, \\
 \int_{\min_{g\in G}|g.x-y|>2|y-y_0|}  |K(y,x)-K( y_0,x)|\;d\mu_\kappa(x)&\leq& C,
\end{eqnarray}
      then $T$  extends to a bounded operator on $L^p(\mathbb{R}^d,d\mu_\kappa)$  for    $1<p< \infty$.
\end{thm}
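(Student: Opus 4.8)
The plan is to prove that $T$ is of weak type $(1,1)$ with respect to $\mu_\kappa$ and then to conclude by interpolation and duality. The starting point is that $(\mathbb{R}^d,|\cdot|,d\mu_\kappa)$ is a space of homogeneous type: the weight $w_\kappa$ is known to be doubling, $\mu_\kappa(B(x,2r))\le C\,\mu_\kappa(B(x,r))$, so the Calder\'on--Zygmund machinery of Coifman--Weiss is available. I would fix $f\in L^1\cap L^2$ and a height $\lambda>0$, and apply the Calder\'on--Zygmund decomposition at level $\lambda$ adapted to $\mu_\kappa$, writing $f=g+b$ with $b=\sum_j b_j$, where each $b_j$ is supported in a ball $B_j=B(y_j,r_j)$, has vanishing $\mu_\kappa$-integral, and satisfies $\sum_j\|b_j\|_{L^1}\le C\|f\|_{L^1}$ and $\sum_j\mu_\kappa(B_j)\le C\lambda^{-1}\|f\|_{L^1}$, while $\|g\|_\infty\le C\lambda$ and $\|g\|_{L^1}\le C\|f\|_{L^1}$. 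The good part is then controlled by the $L^2$ hypothesis: Chebyshev together with $\|Tg\|_{L^2}^2\lesssim\|g\|_{L^2}^2\le\|g\|_\infty\|g\|_{L^1}\lesssim\lambda\|f\|_{L^1}$ gives $\mu_\kappa(\{|Tg|>\lambda/2\})\lesssim\lambda^{-1}\|f\|_{L^1}$.

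The heart of the argument — and the place where the group enters — is the bad part. Instead of excising the usual dilated balls, I would remove their full $G$-orbits. Setting $B_j^*=\bigcup_{g\in G}g\cdot B(y_j,\varkappa r_j)$ for a fixed dilation constant $\varkappa$, the $G$-invariance of $\mu_\kappa$ and the finiteness of $G$ give $\mu_\kappa(B_j^*)\le|G|\,\mu_\kappa(B(y_j,\varkappa r_j))\le C\mu_\kappa(B_j)$, so $\mu_\kappa(\bigcup_j B_j^*)\lesssim\lambda^{-1}\|f\|_{L^1}$ by doubling. The key geometric point is that for $x\notin B_j^*$ one has $g^{-1}x\notin B(y_j,\varkappa r_j)$ for every $g$, hence $\min_{g\in G}|g\cdot x-y_j|\ge\varkappa r_j$ (using that $g\mapsto g^{-1}$ permutes $G$); choosing $\varkappa\ge 3$ and using $|y-y_j|<r_j$ for $y\in B_j$, this forces $\min_{g\in G}|g\cdot x-y|>2|y-y_j|$, which is exactly the domain appearing in \eqref{ker}. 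Note also that $x\notin B_j^*$ ensures $G\cdot x\cap\operatorname{supp}(b_j)=\emptyset$, so the integral representation \eqref{sing} is legitimately applicable.

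Using the cancellation of $b_j$ to write $Tb_j(x)=\int[K(x,y)-K(x,y_j)]\,b_j(y)\,d\mu_\kappa(y)$, Fubini and \eqref{ker} then yield $\int_{\mathbb{R}^d\setminus B_j^*}|Tb_j|\,d\mu_\kappa\le C\|b_j\|_{L^1}$; summing over $j$ gives $\int_{\mathbb{R}^d\setminus\bigcup_j B_j^*}|Tb|\,d\mu_\kappa\le C\|f\|_{L^1}$, and Chebyshev on the complement of $\bigcup_j B_j^*$ (whose measure is already $\lesssim\lambda^{-1}\|f\|_{L^1}$) completes the weak-$(1,1)$ estimate. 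Marcinkiewicz interpolation between this bound and the $L^2$ bound then yields boundedness on $L^p$ for $1<p\le 2$. For the range $2\le p<\infty$ I would pass to the adjoint $T^*$, whose kernel is (the conjugate of) $K(y,x)$ and which therefore satisfies the second H\"ormander condition; the same argument shows $T^*$ is bounded on $L^{p'}$, and duality returns the bound for $T$ on $L^p$.

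The main obstacle I anticipate is confined to the bad-part estimate: making the orbit-saturation bookkeeping precise, so that the enlarged exceptional set $\bigcup_j B_j^*$ still has the correct measure (this is where $G$-invariance and finiteness of $G$ are essential) and so that its complement lands inside the orbit-distance region of \eqref{ker} with the right geometric constants. Once the modified excision is set up, the remainder follows the standard Calder\'on--Zygmund scheme.
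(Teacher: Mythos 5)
Your proof is correct and is essentially the argument this paper relies on: the paper does not prove Theorem \ref{th1} itself but defers to the references cited right after it (\cite{BA}, \cite{BM}), where the result is obtained by precisely this Calder\'on--Zygmund scheme on the homogeneous-type space $(\mathbb{R}^d,|\cdot|,d\mu_\kappa)$, with the Calder\'on--Zygmund balls saturated by $G$-orbits so that, by the triangle-inequality step you describe, their complement lies in the region $\min_{g\in G}|g.x-y|>2|y-y_0|$ of the H\"ormander conditions. All the essential ingredients in your write-up --- doubling and $G$-invariance of $\mu_\kappa$, finiteness of $G$ to control $\mu_\kappa\bigl(\bigcup_j B_j^*\bigr)$, the cancellation of the $b_j$ combined with \eqref{ker}, then Marcinkiewicz interpolation and duality through the second condition --- match the cited treatment.
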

This theorem is stated and proved in (\cite{BA}, \cite{BS}) and in \cite{AB} when $d=1$. We will show that  Riesz transform $R_j^\kappa$ has an integral representation satisfying  the  conditions in  Theorem \ref{th1} which proves the following main result.
\begin{thm}\label{th2}
The  Riesz transforms $R_j^\kappa$, $1\leq j\leq d$ can be extended to a bounded operators on $L^p(\mathbb{R}^d,d\mu_\kappa)$ to itself,
with  $1<p<\infty$.
\end{thm}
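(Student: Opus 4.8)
The plan is to apply Theorem~\ref{th1} to $T=R_j^\kappa$, which requires checking three things: boundedness on $L^2(\mathbb{R}^d,d\mu_\kappa)$, the off-diagonal integral representation \eqref{sing}, and the two Hörmander conditions in \eqref{ker}.

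First I would settle the $L^2$ boundedness through the Dunkl--Hermite spectral calculus. On the orthonormal basis of Dunkl--Hermite functions $\{\Phi_\nu\}$ one has $L_\kappa\Phi_\nu=(2\abs{\nu}+2\gamma_\kappa+d)\Phi_\nu$, so $L_\kappa\geq(2\gamma_\kappa+d)\,I>0$ and $L_\kappa^{-1/2}$ is a bounded positive operator. Since $T_j^\kappa$ is skew-symmetric with respect to $d\mu_\kappa$, the adjoint of $\delta_j^\kappa$ is exactly $\delta_j^{\kappa^*}$, and the factorization $L_\kappa=\tfrac12\sum_j(\delta_j^\kappa\delta_j^{\kappa^*}+\delta_j^{\kappa^*}\delta_j^\kappa)$ together with $\delta_j^\kappa\delta_j^{\kappa^*}\geq0$ gives $\sum_j\delta_j^{\kappa^*}\delta_j^\kappa\leq 2L_\kappa$. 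Hence
\[
\sum_{j=1}^d\norm{R_j^\kappa f}_2^2=\Big\langle L_\kappa^{-1/2}f,\ \Big(\sum_{j}\delta_j^{\kappa^*}\delta_j^\kappa\Big)L_\kappa^{-1/2}f\Big\rangle\leq 2\big\langle L_\kappa^{-1/2}f,\ L_\kappa L_\kappa^{-1/2}f\big\rangle=2\norm{f}_2^2,
\]
so each $R_j^\kappa$ is bounded on $L^2(\mathbb{R}^d,d\mu_\kappa)$.

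Next I would produce the kernel by subordination. Writing $L_\kappa^{-1/2}=\pi^{-1/2}\int_0^\infty e^{-tL_\kappa}\,t^{-1/2}\,dt$ (as in \cite{Th}) gives $R_j^\kappa=\pi^{-1/2}\int_0^\infty \delta_j^\kappa e^{-tL_\kappa}\,t^{-1/2}\,dt$. The semigroup $e^{-tL_\kappa}$ has the explicit Mehler-type kernel $M_t(x,y)$ from \cite{NS}, built from the Dunkl kernel $E_\kappa$ and a Gaussian factor in $\abs{x}^2+\abs{y}^2$; applying $\delta_{j,x}^\kappa=T_{j,x}^\kappa+x_j$ in the first variable and integrating yields the candidate
\[
K_j(x,y)=\frac{1}{\sqrt{\pi}}\int_0^\infty t^{-1/2}\,\delta_{j,x}^\kappa M_t(x,y)\,dt,\qquad x\notin G.y.
\]
A dominated-convergence argument on $f$ with $supp(f)\cap G.x=\emptyset$ then establishes \eqref{sing}.

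The core of the proof is the verification of \eqref{ker}, and this is where I expect the main difficulty. Setting $d(x,y)=\min_{g\in G}\abs{g.x-y}$, I would split the time integral at $t=1$: for $t>1$ the Gaussian factor forces exponential decay and is harmless, while for $0<t<1$ one compares $M_t$ and its derivatives with the Dunkl heat-kernel bounds to obtain Calderón--Zygmund-type estimates
\[
\abs{K_j(x,y)}\lesssim\frac{1}{d(x,y)\,\mu_\kappa\!\big(B(x,d(x,y))\big)},\qquad \abs{\nabla_y K_j(x,y)}\lesssim\frac{1}{d(x,y)^2\,\mu_\kappa\!\big(B(x,d(x,y))\big)},
\]
together with the symmetric bounds in the first variable needed for the companion inequality. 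Because $\mu_\kappa$ is doubling for the orbit distance $d$, the smoothness bound $\abs{K_j(x,y)-K_j(x,y_0)}\lesssim\abs{y-y_0}\,\sup\abs{\nabla_y K_j}$ can then be summed over the dyadic annuli $2^k\abs{y-y_0}<d(x,y)\leq 2^{k+1}\abs{y-y_0}$ to give \eqref{ker} and its analogue, after which Theorem~\ref{th1} yields the asserted $L^p$ boundedness. The hard part is precisely these uniform pointwise estimates: the Dunkl kernel $E_\kappa$, and hence $M_t$, has no closed form for a general reflection group in arbitrary dimension, and the nonlocal difference term in $\delta_j^\kappa$ must be controlled uniformly over $G$ while simultaneously absorbing the $t^{-1/2}$ singularity at $t=0$.
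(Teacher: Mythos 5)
Your first two steps coincide with the paper's: the $L^2$ bound is the paper's (unnumbered-in-text) $L^2$ proposition almost verbatim, and the subordination formula is exactly the construction \eqref{k_j} and Proposition \ref{prop1} (note only that identifying the spectrally defined $R_j^\kappa f$ with the pointwise integral requires, beyond dominated convergence, the duality/density argument with test functions $\varphi$ that the paper carries out). The genuine gap is in your third step. Writing $d(x,y)=\min_{g\in G}\abs{g.x-y}$ as you do, you reduce \eqref{ker} to the asserted pointwise bounds $\abs{K_j(x,y)}\lesssim \bigl(d(x,y)\,\mu_\kappa(B(x,d(x,y)))\bigr)^{-1}$ and $\abs{\nabla_y K_j(x,y)}\lesssim \bigl(d(x,y)^2\,\mu_\kappa(B(x,d(x,y)))\bigr)^{-1}$, but you give no mechanism for proving them, and they are out of reach of the toolkit you invoke: from the Mehler formula and the Bochner representation \eqref{dk}, the only decay extractable is through \eqref{sq}, which yields $k_t(x,y)\lesssim t^{-\gamma_\kappa-\frac d2}e^{-c\,d(x,y)^2/t}$ and its analogues (Lemmas \ref{lem1} and \ref{lem3}); the exponent $\gamma_\kappa+\frac d2$ is the \emph{global} homogeneous dimension and cannot be upgraded to the local normalization $\mu_\kappa(B(x,\sqrt t))^{-1}$ without sharp estimates of $E_\kappa$ near the reflecting hyperplanes of a general group $G$, which this representation does not provide. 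Worse, with the bounds that are actually derivable the dyadic-annuli argument fails quantitatively: subordination gives at best $\abs{\nabla_y K_j(x,y)}\lesssim d(x,y)^{-2\gamma_\kappa-d-1}$, while $\mu_\kappa(B(y,r))\approx r^{d}\prod_{\alpha\in R_+}(\abs{\langle\alpha,y\rangle}+r)^{2\kappa(\alpha)}$ can be of order $r^{d}\abs{y}^{2\gamma_\kappa}\gg r^{2\gamma_\kappa+d}$, so the annulus $d(x,y)\approx 2^{k}\abs{y-y_0}$ contributes about $2^{-k}\bigl(\abs{y}/(2^{k}\abs{y-y_0})\bigr)^{2\gamma_\kappa}$ and the sum is unbounded as $\abs{y-y_0}/\abs{y}\to0$. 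Finally, the control of the nonlocal difference part of $\delta_j^\kappa$, which you correctly single out as the crux, is exactly what you leave unproved.

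The paper closes these gaps by never using pointwise Calder\'on--Zygmund bounds at all. H\"ormander's condition is an integral condition, and Lemmas \ref{lem1} and \ref{lem3} dominate $h_t$ and $\partial h_t/\partial y_i$ by sums of Dunkl-translated Gaussians $\sum_{\alpha\in\mathcal{R}_+}\tau^\kappa_{\sigma_\alpha.x}(e^{-c|\cdot|^2/t})(-y)$; the difference part of $T_j^\kappa$ is handled by the elementary estimate of $\phi_t(u,v)$, split according to $\abs{u}\le2\abs{v}$ or not, together with the reflection symmetry $\langle\sigma_\alpha.y,\alpha\rangle=-\langle y,\alpha\rangle$, which is what produces the reflected centers $\sigma_\alpha.x$. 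Then in Proposition \ref{prop2} the $x$-integral is computed \emph{exactly} via the $L^1$-invariance \eqref{int} of the Dunkl translation, after the absorption $\tau^\kappa_{\sigma_\alpha.x}(e^{-\frac ct|\cdot|^2})(-y_\theta)\le\tau^\kappa_{\sigma_\alpha.x}\bigl(e^{-\frac{c}{4t}(|\cdot|+|y-y_0|)^2}\bigr)(-y_\theta)$, which is legitimate because $\min_{g\in G}\abs{g.x-y}>2\abs{y-y_0}$ forces $\min_{g\in G}\abs{g.x-y_\theta}>\abs{y-y_0}$; what remains is a scalar $t$- and radial integral yielding the uniform constant. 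To repair your outline you must either adopt this mechanism (Lemma \ref{lem3} plus exact integration via \eqref{int}) or independently prove your asserted ball-volume heat-kernel estimates, which is a substantially harder problem than Theorem \ref{th2} itself.
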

Finally, throughout this paper, C will denote a positive constant whose value is not necessarily the same at each occurrence.
 \section{Background and outline of the proof. }
We start by recalling    some  basic notations and   facts  from the  Dunkl  theory. For more details see references
  \cite{Du,Je,R1,R4,R2, R3,Xu}  and the literature cited there.
 \par First of all we note the following product rule, which is confirmed by a short calculation:
\begin{eqnarray}\label{id}
T_j^\kappa(\varphi\psi)=T_j^\kappa(\varphi)\psi+\varphi T_j^\kappa(\psi),\qquad 1\leq j\leq d,
\end{eqnarray}
if $\varphi,\psi\in C^1(\mathbb{R}^d)$ and  at least one of them is $G$-invariant.
\par The Dunkl kernel $E_\kappa$ is defined  on $\mathbb{R}^d\times\mathbb{C}^d$ by:   for  $y\in  \mathbb{C}^d$, $E_\kappa(.,y)$ is  the unique solution of the system:
\begin{eqnarray}\label{TE}
 T_j^\kappa f=y_jf, \quad f(0)=1.
\end{eqnarray}
 This kernel is symmetric with respect to its arguments and  has a unique holomorphic extension on $\mathbb{C}^d\times \mathbb{C}^d$. Moreover,
  $$E_\kappa(\lambda x,y)=E_\kappa(x,\lambda y)\quad\text{and}\quad E_\kappa(gx,gy)=E_\kappa(x,y)$$
for all $x,y\in\mathbb{C}^d$, $\lambda\in \mathbb{\mathbb{C}}$ and $g\in G$. The kernel $E_\kappa$ is connected with the exponential function by the Bochner-type representation
  \begin{eqnarray}\label{dk}
E_\kappa(x,y)=\int_{\mathbb{R}^d}e^{\langle \eta,y \rangle} d\nu_x(\eta), \qquad x,y \in\mathbb{R}^d
\end{eqnarray}
   where $\nu_x$ is a probability measure
supported in the the convex hull $co(G.x)$ and satisfies:
\begin{eqnarray}\label{mg}
\nu_{r x}(r B)=\nu_x(r^{-1}B)\quad \text{and}\quad \nu_{gx}( B)=\nu_x(g^{-1}B)
\end{eqnarray}
for each $r > 0$, $g \in G$ and each Borel set $B \subset \mathbb{R}^d$.
\par  The Dunkl transform    is defined,
for $f \in L^1(\mathbb{R}^d, d\mu_\kappa)$ by:
 \begin{eqnarray*}
 \mathcal{F}_\kappa(f)(\xi)=\frac{1}{c_\kappa}
\int_{\mathbb{R}^N}\!f(x)\,E_\kappa(-i\,\xi,x)d\mu_\kappa(x),\quad c_\kappa\,=\int_{\mathbb{R}^N}\!e^{-\frac{|x|^2}2}\,d\mu_\kappa(x).
  \end{eqnarray*}
   It plays the same
role as the Fourier transform in classical Fourier analysis $ (\kappa\equiv0)$ and enjoys
similar properties.
\par On  $L^2(\mathbb{R}^d,d\mu_\kappa)$ we define  the Dunkl translation operator
 $\tau_x,\; x\in\mathbb{R}^d$ by
\begin{eqnarray*}\label{dutr}
\mathcal{F}_\kappa(\tau_x^\kappa(f))(y)=
E_\kappa(ix,y)\mathcal{F}_\kappa (f)(y), \qquad y\in\mathbb{R}^d.
\end{eqnarray*}
When  $f$ is a  continuous radial function in $L^2(\mathbb{R}^d,d\mu_\kappa)$ with
$f(y)=\widetilde{f}(|y|)$, an explicit formula of $ \tau^\kappa_x(f)$ is given by
\begin{eqnarray}\label{trad}
              \tau^\kappa_x(f)(y)=\int_{\mathbb{R}^{N}}\widetilde{f}\Big(\;\sqrt{|x|^2+|y|^2+2<y,\eta>}\;\Big)d\nu_x(\eta).
\end{eqnarray}
This formula is  proved  by   M. R\"{o}sler \cite{R3}  for  Schwartz functions and     extended  to continuous functions by F. Dai and H.  Wang \cite {FW}. The Dunkl translation operator satisfies:
\begin{itemize}
\item [(i)]For all $x,y\in\mathbb{R}^d$ and $f\in L^2(\mathbb{R}^d,d\mu_\kappa)$,
\begin{eqnarray}\label{p}
\tau^\kappa_x(f)(y)=\tau^\kappa_y(f)(x).
\end{eqnarray}
  \item [(i)]For all $x\in\mathbb{R}^d$ and $f\in L^2(\mathbb{R}^d,d\mu_\kappa)\cap L^1(\mathbb{R}^d,d\mu_\kappa)$,
\begin{eqnarray}\label{int}
\int_{{\mathbb{R}}^d}\tau^\kappa_x(f)(y)d\mu_\kappa(y)=\int_{{\mathbb{R}}^d }f (y)d\mu_\kappa(y).
\end{eqnarray}
  \end{itemize}
     \par Let  $\mathcal{P} = \mathbb{C}[\mathbb{R}^d]$  the algebra of polynomial functions on $\mathbb{R}^d$ and $\mathcal{P}_N$, $N\in \mathbb{N}$ the subspace of homogenous polynomials of degree $N$. In \cite{CD},  C. F. Dunkl  introduced the following bilinear form on $\mathcal{P}$,

      \begin{eqnarray*}\label{lap}
        [p,q]_\kappa=\left(p(T)q\right)(0)=c_\kappa^{-1}\int_{\mathbb{R}^d}e^{\frac{-\Delta_\kappa}{2}}p(x)
        e^{\frac{-\Delta_\kappa}{2}}q(x)e^{\frac{-|x|^2}{2}}d\mu_\kappa(x).
\end{eqnarray*}
     where the operator $p(T)$ derived from $p(x)$ by replacing $x_j$ by $T_j^\kappa$.
    According to the  identity \cite{R1}
     $$\left(e^{\frac{-\triangle_\kappa}{2}}p\right)(\sqrt{2}x)=2^{\frac{N}{2}}\left(e^{\frac{-\triangle_\kappa}{4}}p\right)(x),\qquad p\in\mathcal{P}_N$$
    one has
 \begin{eqnarray*}
        [p,q]_\kappa=m_\kappa2^{N}\int_{\mathbb{R}^d}e^{\frac{-\Delta_\kappa}{4}}p(x)
        e^{\frac{-\Delta_\kappa}{4}}q(x)e^{-|x|^2}d\mu_\kappa(x), \qquad\; p,\;q\in\mathcal{P}_N
\end{eqnarray*}
where $m_\kappa=2^{\gamma_\kappa+\frac{d}{2}}c_\kappa^{-1}$. Then for a given  orthonormal basis $\set{\varphi_n,n\in \mathbb{N}^d}$ of $\mathcal{P}$ with respect to $[. , .]_\kappa$ such that $\varphi_n\in\mathcal{P}_{|n|}$ and with real coefficients we define the generalized
Hermite polynomials  $H_n^\kappa$ and the  generalized Hermite functions $h_n^\kappa$ on $\mathbb{R}^d$ by
  \begin{eqnarray*}
 H_n^\kappa(x)=2^{|n|}e^{\frac{-\Delta_\kappa}{4}}\varphi_n(x),\;\text{and }\;
 h_n^\kappa(x)=2^{-\frac{|n|}{2}}\sqrt{m_\kappa}\;e^{\frac{-|x|^2}{2}}H_n(x),\quad n\in \mathbb{N}^d
 \end{eqnarray*}where here $|n|=\sum_{j=1}^dn_j$.
  These are  described essentially in \cite{R1}.
The  important basic properties of $H_n^\kappa$ and $h_n^\kappa$  are \\
 \begin{itemize}
   \item [(i)]  Mehler-formula: For $r\in \mathbb{C}$ with $|r|<1$ and all $x,y\in \mathbb{R}^d$,
 \begin{eqnarray}\label{Meh}
 \sum_{n\in\mathbb{N}^d}\frac{H_n^\kappa(x)H_n^k(y)}{2^{|n|}}r^{|n|}=\frac{1}{(1-r^2)^{\gamma_\kappa+\frac{d}{2}}}e^{-\frac{r^2}{1-r^2}(|x|^2+|y|^2)}
E_\kappa(\frac{2r}{1-r^2}x,y).
  \end{eqnarray}
\item [(ii)] $h_n^\kappa$, $n\in \mathbb{N}^d$ are eigenfunctions of the operator $L_\kappa$,  with
$$L_\kappa(h_n^\kappa)=(2|n|+2\gamma_\kappa+d)h_n^\kappa.$$
    \item [(iii)]The set $\set{h_n^\kappa,\;n\in \mathbb{N}^d}$ forms an orthonormal basis of $L^2(\mathbb{R}^d; d\mu_\kappa)$.
     \end{itemize}
  \par Let  $ \mathcal{D}$ denotes the subspace of $L^2(\mathbb{R}^d; d\mu_\kappa)$ spanned by   $\set{h_n^\kappa,n\in \mathbb{N}^d}$. The Riesz transform $R_j^k$, $1\leq j\leq d$, associated with $L_\kappa$ is given on $ \mathcal{D}$ by
\begin{eqnarray}\label{def}
R_j^\kappa(f)=\delta_j^\kappa L_\kappa^{-\frac{1}{2}}(f)=\sum_{n\in\mathbb{N}^d}(2|n|+2\gamma_\kappa+d)^{-\frac{1}{2}}\langle f,h_n^\kappa\rangle \delta_j^\kappa h_n^\kappa.
\end{eqnarray}
Here $L_\kappa^{-\frac{1}{2}}$ is the continuous operator on $L^2(\mathbb{R}^d; d\mu_\kappa)$ given  via Spectral Theorem  by
$$L_\kappa^{-\frac{1}{2}}(f)=\sum_{n\in\mathbb{N}^d}(2|n|+2\gamma_\kappa+d)^{-\frac{1}{2}}\langle f,h_n^\kappa\rangle  h_n^\kappa.$$
\begin{prop}The Riesz transform $R_j^k$ extends  to a bounded operator from $L^2(\mathbb{R}^d; d\mu_\kappa)$ to itself.
\end{prop}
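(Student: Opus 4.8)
The plan is to work on the dense subspace $\mathcal{D}$ of finite linear combinations of the $h_n^\kappa$ and to establish the single quadratic estimate $\sum_{j=1}^d\norm{R_j^\kappa f}_2^2\leq 2\norm{f}_2^2$, which immediately gives $\norm{R_j^\kappa f}_2\leq\sqrt2\,\norm{f}_2$ for each $j$ and hence, by density of $\mathcal{D}$ in $L^2(\mathbb{R}^d,d\mu_\kappa)$, the asserted bounded extension. The first ingredient I would record is that $\delta_j^{\kappa^*}$ is genuinely the adjoint of $\delta_j^\kappa$ with respect to $d\mu_\kappa$: since $T_j^\kappa$ is skew-adjoint for $d\mu_\kappa$ (integration by parts, legitimate here because each $h_n^\kappa$ is a Gaussian times a polynomial, hence Schwartz) while multiplication by $x_j$ is self-adjoint, one gets $(\delta_j^\kappa)^*=(T_j^\kappa+x_j)^*=-T_j^\kappa+x_j=\delta_j^{\kappa^*}$.

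The heart of the argument is a positivity observation that avoids any explicit commutator computation. For $g\in\mathcal{D}$, the factorization $L_\kappa=\tfrac12\sum_{j}(\delta_j^\kappa\delta_j^{\kappa^*}+\delta_j^{\kappa^*}\delta_j^\kappa)$ combined with the adjoint relation above yields
\begin{equation*}
2\langle L_\kappa g,g\rangle=\sum_{j=1}^d\big(\langle\delta_j^\kappa\delta_j^{\kappa^*}g,g\rangle+\langle\delta_j^{\kappa^*}\delta_j^\kappa g,g\rangle\big)=\sum_{j=1}^d\big(\norm{\delta_j^{\kappa^*}g}_2^2+\norm{\delta_j^\kappa g}_2^2\big).
\end{equation*}
Discarding the nonnegative family $\norm{\delta_j^{\kappa^*}g}_2^2$ gives at once the form inequality $\sum_{j}\norm{\delta_j^\kappa g}_2^2\leq 2\langle L_\kappa g,g\rangle$.

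To finish, I would take $f\in\mathcal{D}$ and set $g=L_\kappa^{-1/2}f$, which again lies in $\mathcal{D}$ because $L_\kappa^{-1/2}$ acts diagonally with eigenvalues $(2|n|+2\gamma_\kappa+d)^{-1/2}$ on the basis $\set{h_n^\kappa}$. Then $R_j^\kappa f=\delta_j^\kappa g$ and
\begin{equation*}
\sum_{j=1}^d\norm{R_j^\kappa f}_2^2=\sum_{j=1}^d\norm{\delta_j^\kappa g}_2^2\leq 2\langle L_\kappa g,g\rangle=2\,\norm{L_\kappa^{1/2}g}_2^2=2\,\norm{f}_2^2,
\end{equation*}
the last two equalities using self-adjointness of $L_\kappa^{1/2}$ and $L_\kappa^{1/2}L_\kappa^{-1/2}f=f$. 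The step I expect to require the most care — and the main technical obstacle — is the rigorous justification of the skew/self-adjointness identities and of $\langle L_\kappa g,g\rangle=\norm{L_\kappa^{1/2}g}_2^2$; both are unproblematic on $\mathcal{D}$, whose elements are Schwartz functions lying in the domains of all operators involved, after which the bound passes to all of $L^2(\mathbb{R}^d,d\mu_\kappa)$ by density. (Equivalently one could expand $\sum_j\delta_j^{\kappa^*}\delta_j^\kappa=L_\kappa-dI-2\sum_{\alpha\in R_+}\kappa(\alpha)\sigma_\alpha$ through the commutator $[T_j^\kappa,x_j]$ and control the reflection part by $\gamma_\kappa\,\norm{L_\kappa^{-1/2}}^2$, but the positivity route above is cleaner and self-contained.)
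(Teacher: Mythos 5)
Your proposal is correct and is essentially the paper's own argument: both rest on the adjoint relation $(\delta_j^\kappa)^*=\delta_j^{\kappa^*}$ deduced from the anti-symmetry of $T_j^\kappa$ with respect to $d\mu_\kappa$, the factorization $L_\kappa=\frac{1}{2}\sum_{j=1}^d(\delta_j^\kappa\delta_j^{\kappa^*}+\delta_j^{\kappa^*}\delta_j^\kappa)$ applied to $g=L_\kappa^{-\frac{1}{2}}f$ on the dense subspace $\mathcal{D}$, discarding nonnegative terms to obtain the bound with constant $\sqrt{2}$, and a density argument. The only cosmetic difference is the order of operations --- the paper adds $\|R_j^{\kappa^*}f\|_{2,\kappa}^2$ and then enlarges to the full sum over $j$, while you sum the quadratic forms first and drop the $\delta_j^{\kappa^*}$ contributions --- which is the same computation.
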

\begin{proof}
Let $\langle.,.\rangle_\kappa$ be the canonical inner product in $L^2(\mathbb{R}^d;d\mu_\kappa)$,
and $\|.\|_{2,\kappa}$ be the norm induced by $\langle.,.\rangle_\kappa$. Notice that  Dunkl operators are anti-symmetric with respect to the measure $\mu_\kappa$ \cite{Du}:
\begin{eqnarray}\label{Du}
\int_{\mathbb{R}^d}(T_j^\kappa f)(x)\varphi(x)d\mu_\kappa(x)=-\int_{\mathbb{R}^d}f(x)(T_j^\kappa \varphi)(x)d\mu_\kappa(x),
\end{eqnarray}
for all $C^1$-function $f$ and all Schwartz function $\varphi$. This implies that
$$\langle\delta_j^\kappa f,g\rangle_\kappa=\langle f,\delta_j^{\kappa^*} g\rangle_\kappa\quad \text{and}\quad
\langle\delta_j^{\kappa^*} f,g\rangle_\kappa=\langle f,\delta_j^{\kappa} g\rangle_\kappa$$
for all $f,g\in\mathcal{D}$. Then with the notation
$$R_j^{\kappa^*}(f)=\delta_j^{\kappa^*}L_\kappa ^{-\frac{1}{2}}(f)=\sum_{n\in\mathbb{N}^d}(2|n|+2\gamma_\kappa+d)^{-\frac{1}{2}}\langle f,h_n^\kappa\rangle \delta_j^{\kappa^*}h_n^\kappa ,\quad f\in\mathcal{D},$$
we have that
\begin{eqnarray*}
 \|R_j^\kappa f\|_{2,k}^2&\leq&  \|R_j^\kappa f\|_{2,k}^2+ \|R_j^{\kappa^*}f\|_{2,k}^2
 \\&=& \langle \delta_j^{\kappa^*}\delta_j L_\kappa^{-\frac{1}{2}}(f),L_\kappa^{-\frac{1}{2}}(f)\rangle_\kappa+
 \langle\delta_j\delta_j^{\kappa^*}L_\kappa^{-\frac{1}{2}}(f),  L_\kappa^{-\frac{1}{2}}(f)\rangle_\kappa\\
 &\leq&\sum_{j=1}^d\langle(\delta_j^{\kappa^*}\delta_j+\delta_j\delta_j^{\kappa^*})L_\kappa^{-\frac{1}{2}}(f),L_\kappa^{-\frac{1}{2}}(f)\rangle_\kappa\\
 &=&2\langle L_\kappa L_\kappa^{-\frac{1}{2}}(f),L_\kappa^{-\frac{1}{2}}(f)\rangle_\kappa=2\|f\|_{2,\kappa}^2.
 \end{eqnarray*}
  Since $\mathcal{D}$ is a dense subspace of  $L^2(\mathbb{R}^d; d\mu_\kappa)$, it follows by density argument that  $R_j^\kappa$ is uniquely extended to a  bounded  operator on
 $L^2(\mathbb{R}^d; d\mu_\kappa)$.
\end{proof}

\subsection{Proof of Theorem \ref{th2}}
 The  Hermite semigroup $e^{-tL_\kappa}$ ( $t\geq0$ ), is given on $ L^2(\mathbb{R}^d; d\mu_\kappa)$ by
$$e^{-tL_\kappa}(f)=\sum_{n\in\mathbb{N}^d}e^{-t(2|n|+2\gamma_\kappa+d)}\langle f,h_n^\kappa\rangle h_n^\kappa$$
and has  the following   integral representation
 \begin{eqnarray}\label{L_k}
e^{-tL_\kappa}(f)(x)=\int_{\mathbb{R}^d}k_t(x,y)f(y)d\mu_\kappa(y),\quad x\in \mathbb{R}^d
 \end{eqnarray}
  where from  Mehler-formula (\ref{Meh})
\begin{eqnarray}\label{eE}
 k_t(x,y)&=&\sum_{n\in\mathbb{N}^d}e^{-t(2|n|+2\gamma_\kappa+d)}h_n^\kappa(x)h_n^\kappa(y)\nonumber\\&=&
m_\kappa e^{-\frac{1}{2}(|x|^2+|y|^2)}e^{-t(2\gamma_\kappa+d)}\sum_{n\in\mathbb{N}^d}e^{-2t|n|}\frac{H_n^\kappa(x)H_n^\kappa(y)}{2^{|n|}}
\nonumber\\&=&m_\kappa(\sinh(2t))^{-\gamma_\kappa-\frac{d}{2}}e^{-\frac{\coth(2t)}{2}(|x|^2+|y|^2)}E_\kappa(\frac{x}{\sinh(2t)},y).
  \end{eqnarray}
According to  (\ref{dk}) it follows that
\begin{eqnarray*}
k_t(x,y)&=&m_\kappa(\sinh(t))^{-\gamma_\kappa-\frac{d}{2}}e^{-\frac{\coth(2t)}{2}(|x|^2+|y|^2)}
\int_{\mathbb{R}^d}e^{\frac{1}{\sinh(2t)}\langle y,\eta\rangle}d\nu_x(\eta)\\
&=&m_\kappa(\sinh(2t))^{-\gamma_\kappa-\frac{d}{2}}\int_{\mathbb{R}^d}e^{-\frac{\coth(2t)}{2}(|x|^2+|y|^2-2\langle y,\eta\rangle)}e^{-\tanh (t)\langle y,\eta\rangle}
 d\nu_x(\eta)
 \\&=& m_\kappa(\sinh(2t))^{-\gamma_\kappa-\frac{d}{2}}\int_{\mathbb{R}^d}e^{-\frac{\coth(2t)}{2}|y-\eta|^2-\tanh (t)\langle y,\eta\rangle}e^{-\frac{\coth(2t)}{2}(|x|^2-|\eta|^2)} d\nu_x(\eta).
\end{eqnarray*}
 Let $ k_t^0$ the  kernel of the classical Hermite semigroup  (\cite{Stemp}, \cite{Th}), which corresponds to $\kappa\equiv0$   and given by
\begin{eqnarray}\label{ks}\nonumber
 k_t^0(x,y)&=&(2\pi\sinh(2t))^{-\frac{d}{2}}e^{-\frac{\coth(2t)}{2}|x-y|^2-\tanh (t)\langle x,y\rangle}\\
&=&(2\pi\sinh(2t))^{-\frac{d}{2}}e^{-\frac{1}{4}(\tanh (t)|x+y|^2+\coth(t)|x-y|^2)}.
\end{eqnarray}
 Thus we can write
  \begin{eqnarray}\label{her}
 k_t(x,y)=m_\kappa(2\pi)^{-\frac{d}{2}}(\sinh2t)^{-\gamma_\kappa}\int_{\mathbb{R}^d} k_t^0( \eta,y)e^{-\frac{\coth(2t)}{2}(|x|^2-|\eta|^2)} d\nu_x(\eta).
 \end{eqnarray}
  We need   some  estimates on the kernel   $k_t$. As we will see later
 these estimates follow from the basic estimates on the kernel $k_t^0$ stated in the following lemma.
     \begin{lemma}\label{01}
There exist $C > 0$ and $a > 0$ such that for all $x, y \in \mathbb{R}^d$ and $1\leq i,j\leq d$   the following hold.
\\ 1)  For  all $0<t\leq1$
\begin{itemize}
  \item [(i)] $\displaystyle{\quad
       \Big| k_t^0(x,y)\Big|  \leq  Ct^{ -\frac{d}{2}}e^{-\frac{a}{ t}|x-y|^2 }  },$
       \item [(ii)]$\displaystyle{\quad
       \Big|y_j k_t^0(x,y)\Big|  \leq  Ct^{ -\frac{d+1}{2}}e^{-\frac{a}{ t}|x-y|^2 }  },$
  \item [(iii )]   $\displaystyle{\quad\Big|\frac{\partial k_t^0}{\partial y_j}(x,y)\Big|    \leq  Ct^{ -\frac{d+1}{2}}e^{-\frac{a}{ t}|x-y|^2 }},$
      \item [(iv)] $\displaystyle{ \quad\Big|y_j\frac{\partial k_t^0}{\partial y_i}(x,y)\Big|    \leq  Ct^{ -\frac{d}{2}-1}e^{-\frac{a}{ t}|x-y|^2 } }.$
\end{itemize}
2)  For  all $t >1$,
     \begin{itemize}
   \item[(v)] $\displaystyle{\quad
    \Big|k_t^0(x,y)\Big| \leq  Ce^{-dt}e^{ -a|x-y|^2 }},$\\
     \item  [(vi)]$\displaystyle{\quad\Big|y_jk_t^0(x,y)\Big|  \leq  Ce^{-dt}e^{ -a|x-y|^2 }}.$
     \end{itemize}
        \end{lemma}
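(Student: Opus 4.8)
The plan is to work directly from the closed form (\ref{ks}),
$$k_t^0(x,y)=(2\pi\sinh 2t)^{-\frac{d}{2}}\,e^{-\frac14(\tanh t\,|x+y|^2+\coth t\,|x-y|^2)},$$
and to reduce everything to the single elementary inequality $\sup_{s\ge0}s^k e^{-\beta s^2}\le C_k\,\beta^{-k/2}$, which lets me trade any polynomial prefactor for a negative power of the relevant Gaussian width while reserving a fixed fraction of the exponential to carry the stated decay. First I collect the elementary comparisons that turn the hyperbolic functions into powers of $t$: for $0<t\le1$ one has $\sinh 2t\ge 2t$, so the normalising factor is $O(t^{-d/2})$, and $1/t\le\coth t\le C/t$, $c\,t\le\tanh t\le t$; for $t>1$ one has $\sinh 2t\ge c\,e^{2t}$, so the factor is $O(e^{-dt})$, while $\coth t\ge1$ and $\tanh t\ge\tanh 1>0$. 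Estimate (i) is then immediate, dropping the nonnegative $|x+y|^2$ term and using $\coth t\ge1/t$ to get $|k_t^0|\le C t^{-d/2}e^{-\frac{1}{4t}|x-y|^2}$; (v) follows the same way with $\coth t\ge1$.

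To handle the $y_j$ factors I write $|y_j|\le\tfrac12(|x+y|+|x-y|)$, so it suffices to absorb $|x+y|$ and $|x-y|$ separately. The inequality gives $|x+y|\,e^{-\frac14\tanh t|x+y|^2}\le C(\tanh t)^{-1/2}\le C t^{-1/2}$, whereas splitting the other Gaussian yields $|x-y|\,e^{-\frac14\coth t|x-y|^2}\le C(\coth t)^{-1/2}e^{-\frac18\coth t|x-y|^2}\le C t^{1/2}e^{-\frac{1}{8t}|x-y|^2}$. Thus the $|x+y|$ branch costs a factor $t^{-1/2}$ (with the full $|x-y|$-Gaussian retained) and the $|x-y|$ branch gains a factor $t^{1/2}$; the worse case $t^{-1/2}$, together with the $t^{-d/2}$ prefactor, gives (ii). For $t>1$ both branches cost only $O(1)$, yielding (vi).

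For the derivatives I compute $\partial_{y_j}k_t^0=\big(-\tfrac12\tanh t\,(x_j+y_j)-\tfrac12\coth t\,(y_j-x_j)\big)k_t^0$ and treat the two summands separately. The ``$+$'' summand carries the small factor $\tanh t$ against the large argument $|x+y|$: $\tanh t\,|x+y|\,e^{-\frac14\tanh t|x+y|^2}\le C(\tanh t)^{1/2}\le C t^{1/2}$, hence is subdominant. The ``$-$'' summand carries the large factor $\coth t$ against $|x-y|$: $\coth t\,|x-y|\,e^{-\frac14\coth t|x-y|^2}\le C(\coth t)^{1/2}e^{-\frac18\coth t|x-y|^2}\le C t^{-1/2}e^{-\frac{1}{8t}|x-y|^2}$, which dominates and produces (iii). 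Finally (iv) combines both mechanisms: multiplying $\partial_{y_i}k_t^0$ by $|y_j|\le\tfrac12(|x+y|+|x-y|)$ and expanding, the worst of the four resulting products pairs a bare $|x+y|$ factor (cost $t^{-1/2}$) with a $\coth t\,|x-y|$ factor (cost $t^{-1/2}$), giving the rate $t^{-1}$ and hence $t^{-d/2-1}$; the other three products are $O(1)$ or $O(t)$.

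The only point requiring genuine care is the bookkeeping of powers of $t$: because the estimates are sharp, at every step one must reserve a fixed portion of the $|x-y|$-Gaussian (I keep $e^{-\frac18\coth t|x-y|^2}$) so that the stated decay survives, and one must correctly pair the ``cheap'' direction $|x-y|$, where $\coth t$ is large, against the ``expensive'' direction $|x+y|$, where $\tanh t$ is small. Choosing a single constant at the end, say $a=\tfrac18$, and weakening every Gaussian to this common rate makes all six bounds hold simultaneously.
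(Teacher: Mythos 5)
Your proof is correct and follows essentially the same route as the paper: both work from the closed form $k_t^0(x,y)=(2\pi\sinh 2t)^{-\frac d2}e^{-\frac14(\tanh t\,|x+y|^2+\coth t\,|x-y|^2)}$, compare the hyperbolic functions to powers of $t$ (resp.\ to $e^{-dt}$ for $t>1$), split $y_j$ into $(x+y)$- and $(x-y)$-contributions, and absorb polynomial factors via $\sup_{s\ge0}s^ke^{-\beta s^2}\le C\beta^{-k/2}$ while reserving half the Gaussian. If anything, your systematic bookkeeping of $(\tanh t)^{\pm 1/2}$ and $(\coth t)^{\pm 1/2}$ is slightly more explicit than the paper's proof of $(iii)$, which cites its estimate (\ref{k1}) in a form that conceals the needed gain of $t^{1/2}$ on the $\coth t\,(y_j-x_j)$ term.
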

\begin{proof}
 1) For $0<t\leq1$, $\sinh(t)$ and $\tanh(t)$ behave like $t$ and $\coth(t)$ behaves like $t^{-1}$ with the  following fact
\begin{eqnarray}\label{coth}
\coth(t)\geq \dfrac{1}{t},\qquad 0<t\leq1.
\end{eqnarray}
 The estimate $(i)$ follows from (\ref{ks}) and (\ref{coth}), since
$$ \Big|k_t^0(x,y)\Big| \leq (2\pi\sinh(2t))^{-\frac{d}{2}}e^{-\frac{1}{4}\coth(t)|x-y|^2)}\leq  Ct^{ -\frac{d}{2}}e^{-\frac{1}{ 4t}|x-y|^2 }. $$
As $|y_j|\leq |y_j-x_j|+|y_j+x_j|$ we have
\begin{eqnarray}\label{k1}\nonumber
\Big|(y_j-x_j)k_t^0(x,y)\Big|&\leq&C t^{-\frac{d}{2}}|y-x|e^{-\frac{1}{4}\coth(t)|x-y|^2}\\&\leq& C t^{-\frac{d}{2}}e^{-\frac{1}{8t}|x-y|^2}.
\end{eqnarray}
Similarly,
\begin{eqnarray}\label{k2}\nonumber
\Big|(y_j+x_j)k_t^0(x,y)\Big|&\leq &C t^{-\frac{d}{2}}\Big(|y+x|e^{-\frac{1}{4}\tanh(t)|x+y|^2}\Big)e^{-\frac{1}{4}\coth(t)|x-y|^2}\\
&\leq &C t^{-\frac{d+1}{2}} e^{-\frac{1}{4}\coth(t)|x-y|^2} \leq  C t^{-\frac{d+1}{2}} e^{-\frac{1}{4t}|x-y|^2}.
\end{eqnarray}
From this estimate (ii) follows immediately. We can also obtain  $(iii)$ from (\ref{k1}) and (\ref{k2}), since we have
\begin{eqnarray}\label{part}
\frac{\partial k_t^0}{\partial y_j}(x,y)=-\frac{1}{2}\Big(\tanh(t)|y_j+x_j|+\coth(t)|y_j-x_j|\Big)k_t^0(x,y).
\end{eqnarray}
The  estimate $(iv)$ follows from (\ref{part}) and the following:
\begin{eqnarray*}
\Big|(y_j-x_j)^2k_t^0(x,y)\Big|&\leq&C t^{-\frac{d}{2}}|y-x|^2e^{-\frac{1}{4}\coth(t)|x-y|^2}\\&\leq& C t^{-\frac{d}{2}}e^{-\frac{1}{8t}|x-y|^2},
\end{eqnarray*}
\begin{eqnarray*}
\Big|(y_j+x_j)^2k_t^0(x,y)\Big|&\leq &C t^{-\frac{d}{2}}\Big(|y+x|^2e^{-\frac{1}{4}\tanh(t)|x+y|^2}\Big)e^{-\frac{1}{4}\coth(t)|x-y|^2}\\
&\leq &C t^{-\frac{d}{2}-1} e^{-\frac{1}{4}\coth(t)|x-y|^2} \leq  C t^{-\frac{d}{2}-1} e^{-\frac{1}{4t}|x-y|^2},
\end{eqnarray*}
and
\begin{eqnarray*}
\Big|(y_j+x_j)(y_j-x_j)k_t^0(x,y)\Big|&\leq &C t^{-\frac{d}{2}}\Big(|y+x|e^{-\frac{1}{4}\tanh(t)|x+y|^2}\Big)\Big(|y_j-x_j|e^{-\frac{1}{4}\coth(t)|x-y|^2}\Big)\\
&\leq &C t^{-\frac{d}{2}-\frac{1}{2}} e^{-\frac{1}{8}\coth(t)|x-y|^2} \leq  C t^{-\frac{d}{2}-\frac{1}{2}} e^{-\frac{1}{8t}|x-y|^2}.
\end{eqnarray*}
Note that all estimates can be made with $a=\dfrac{1}{8}$.
\par 2) For $t\geq 1$ the proof is very similar to that of 1). We have to use the fact that both $\sinh(2t)$ and $\coth(2t)$ behave like $e^{2t}$
with $\coth(t)\geq 1$.
\end{proof}
 As a consequence of Lemma \ref{01}  we obtain the following:
\begin{lemma}\label{lem1}
 There exist $C > 0$ and $a > 0$ such that for all $x, y \in \mathbb{R}^d$ and $1\leq i,j\leq d$    the following hold.
\\ 1)  For  all  $0<t\leq1$,
   \begin{itemize}
    \item [(i)] $\displaystyle{\quad
       \Big| k_t (x,y)\Big|  \leq Ct^{-\gamma_\kappa-\frac{d}{2}}\;\tau^\kappa_x(e^{-\frac{b}{ t}|.|^2})(-y)  },$
       \item [(ii)] $\displaystyle{\quad
       \Big|y_j k_t (x,y)\Big|  \leq Ct^{-\gamma_\kappa-\frac{d+1}{2}}\;\tau^\kappa_x(e^{-\frac{b}{ t}|.|^2})(-y)  },$
    \item [(iii)]$\displaystyle{\quad\Big|\frac{\partial k_t}{\partial y_j} (x,y)\Big|    \leq  Ct^{-\gamma_\kappa-\frac{d+1}{2}}\;\tau^\kappa_x(e^{-\frac{b}{ t}|.|^2})(-y)},$
    \item [(iv)]$\displaystyle{\quad \Big|y_j\frac{\partial k_t }{\partial y_i}(x,y)\Big|    \leq  Ct^{-\gamma_\kappa-\frac{d}{2}-1}\;\tau^\kappa_x(e^{-\frac{b}{ t}|.|^2})(-y) }$.
  \end{itemize}
  2)For all  $ t>1$,
  \begin{itemize}
    \item [(v)]$\displaystyle{\Big| k_t (x,y)\Big|  \leq  Ce^{-(2\gamma_\kappa+d)t}\tau^\kappa_x(e^{ -b |.|^2})(-y)  },$
    \item [(vi)]$\displaystyle{\Big|y_j k_t (x,y)\Big|  \leq   Ce^{-(2\gamma_\kappa+d)t}\tau^\kappa_x(e^{ -b |.|^2})(-y) }.$
  \end{itemize}
     \end{lemma}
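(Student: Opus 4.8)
The plan is to substitute the integral representation (\ref{her}) of $k_t$ into the pointwise bounds on the classical kernel $k_t^0$ supplied by Lemma \ref{01}, and then to recognize the resulting $\nu_x$-integral as a Dunkl translation of a Gaussian via formula (\ref{trad}). The one algebraic fact that makes everything fit is that, for $c>0$, applying (\ref{trad}) to the radial function $e^{-c|\cdot|^2}$ at the point $-y$ gives
\begin{eqnarray*}
\tau^\kappa_x(e^{-c|\cdot|^2})(-y)&=&\int_{\mathbb{R}^d}e^{-c(|x|^2+|y|^2-2\langle y,\eta\rangle)}\,d\nu_x(\eta)\\
&=&\int_{\mathbb{R}^d}e^{-c|y-\eta|^2}\,e^{-c(|x|^2-|\eta|^2)}\,d\nu_x(\eta),
\end{eqnarray*}
so that a Gaussian in the variable $y-\eta$ multiplied by the factor $e^{-c(|x|^2-|\eta|^2)}$ is exactly the translation of a single Gaussian. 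I record also that, since $\nu_x$ is supported in $co(G.x)$ and every $g.x$ has $|g.x|=|x|$, one has $|\eta|\leq|x|$, hence $|x|^2-|\eta|^2\geq0$ on the support of $\nu_x$.

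For part 1) (the range $0<t\leq1$) I would start from (\ref{her}), noting that multiplication by $y_j$ and differentiation in $y$ act only on the factor $k_t^0(\eta,y)$ inside the integral, and bound that factor by the appropriate estimate of Lemma \ref{01}: item (i) by Lemma \ref{01}(i), item (ii) by Lemma \ref{01}(ii), item (iii) by Lemma \ref{01}(iii), and item (iv) by Lemma \ref{01}(iv). This pulls out the required power of $t$, namely $t^{-\gamma_\kappa}$ coming from $(\sinh 2t)^{-\gamma_\kappa}\sim t^{-\gamma_\kappa}$ together with one of $t^{-d/2}$, $t^{-(d+1)/2}$ or $t^{-d/2-1}$, and leaves the integral $\int e^{-\frac{a}{t}|\eta-y|^2}e^{-\frac{\coth(2t)}{2}(|x|^2-|\eta|^2)}\,d\nu_x(\eta)$.

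The claimed form then appears through a comparison of the two exponents. Using $\coth(2t)\geq\frac{1}{2t}$ (which is (\ref{coth}) with $2t$ in place of $t$) together with $|x|^2-|\eta|^2\geq0$, one has $e^{-\frac{\coth(2t)}{2}(|x|^2-|\eta|^2)}\leq e^{-\frac{b}{t}(|x|^2-|\eta|^2)}$ for any $b\leq\frac14$; choosing $b=\min(a,\frac14)$ and bounding $e^{-\frac{a}{t}|\eta-y|^2}\leq e^{-\frac{b}{t}|\eta-y|^2}$, the integrand is dominated by $e^{-\frac{b}{t}|y-\eta|^2}e^{-\frac{b}{t}(|x|^2-|\eta|^2)}$, whose $\nu_x$-integral equals $\tau^\kappa_x(e^{-\frac{b}{t}|\cdot|^2})(-y)$ by the identity above. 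This yields (i)--(iv).

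For part 2) (the range $t>1$) the argument is identical, but invoking items (v)--(vi) of Lemma \ref{01} and the asymptotics $\sinh(2t)\sim\frac12 e^{2t}$, $\coth(2t)\to1$: the prefactor $(\sinh 2t)^{-\gamma_\kappa}e^{-dt}$ is at most $Ce^{-(2\gamma_\kappa+d)t}$, while $\coth(2t)\geq1$ lets one replace $e^{-\frac{\coth(2t)}{2}(|x|^2-|\eta|^2)}$ by $e^{-b(|x|^2-|\eta|^2)}$ for any $b\leq\frac12$; taking $b=\min(a,\frac12)$ and again recognizing the $\nu_x$-integral as $\tau^\kappa_x(e^{-b|\cdot|^2})(-y)$ gives (v) and (vi). The only step that requires genuine care is this absorption of the factor $e^{-\frac{\coth(2t)}{2}(|x|^2-|\eta|^2)}$, since it is precisely what repackages the two separate Gaussians into a single Dunkl translation; everything else is a direct substitution of Lemma \ref{01} into (\ref{her}).
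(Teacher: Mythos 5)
Your proposal is correct and follows essentially the same route as the paper's proof: substitute the estimates of Lemma \ref{01} into the representation (\ref{her}), use $\coth(2t)\geq\frac{1}{2t}$ (resp.\ $\coth(2t)\geq1$ for $t>1$) together with $|\eta|\leq|x|$ on the support of $\nu_x$ to dominate everything by a single Gaussian with $b=\min(a,\frac14)$, and identify the resulting $\nu_x$-integral as $\tau^\kappa_x(e^{-\frac{b}{t}|\cdot|^2})(-y)$ via (\ref{trad}). Your handling of the $t>1$ case and of the sign of $|x|^2-|\eta|^2$ is simply a more explicit version of what the paper dispatches with ``we obtain $(ii)$--$(vi)$ by similar way.''
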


    \begin{proof}
From (\ref{her}), Lemma \ref{01} and (\ref{trad}) we have with  $b=\min(a,\frac{1}{4})$,
 \begin{eqnarray*}
  \Big| k_t (x,y)\Big|&\leq& Ct^{-\gamma_\kappa-\frac{d}{2}}\int_{\mathbb{R}^d} e^{-\frac{a}{ t}|y-\eta|^2}e^{-\frac{ 1}{4t}(|x|^2-|\eta|^2)} d\nu_x(\eta)\\
 &\leq& Ct^{-\gamma_\kappa-\frac{d}{2}}\int_{\mathbb{R}^d} e^{-\frac{b}{t}\;( | \eta-y|^2+|x|^2-|\eta|^2)}d\nu_x(\eta)
 \\&=& Ct^{-\gamma_\kappa-\frac{d}{2}}\int_{\mathbb{R}^d} e^{-\frac{b}{t}\;( |x|^2+|y|^2-2\langle y,\eta\rangle)}d\nu_x(\eta)
=Ct^{-\gamma_\kappa-\frac{d}{2}}
 \tau^\kappa_x(e^{\frac{-b}{ t}|.|^2})(-y)
 \end{eqnarray*}
 which proves (i). We obtain $(ii)-(vi)$  by similar way.
      \end{proof}
    \begin{lemma}\label{lem3}
There exist constants  $C>0$ and $c>0$ such that for all  $x,y\in\mathbb{R}^d$,   $ 0<t\leq1$ and $1\leq i, j\leq d$,
    \begin{itemize}
      \item [(i)]  $\displaystyle{\Big|(x_j-y_j) k_t (x,y)\Big|  \leq   Ct^{-\gamma_\kappa-\frac{d}{2}+\frac{1}{2}}\sum_{\alpha\in \mathcal{R}_+}\tau^\kappa_{\sigma_\alpha.x}(e^{-\frac{c}{ t}|.|^2})(-y)      },$
       \item[(ii)] $\displaystyle{\Big|(x_j-y_j) \frac{\partial k_t}{\partial y_i} (x,y)\Big|  \leq    Ct^{-\gamma_\kappa-\frac{d }{2}}\sum_{\alpha\in \mathcal{R}_+}\tau^\kappa_{\sigma_\alpha.x}(e^{-\frac{c}{ t}|.|^2})(-y)},$
      \end{itemize}
where $\mathcal{R}_+=R_+\cup\{0\}$ and with $\sigma_0=id$.
    \end{lemma}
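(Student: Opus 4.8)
\emph{Proof plan.} The starting point is the subordinated representation (\ref{her}), which writes $k_t(x,y)$ as $m_\kappa(2\pi)^{-\frac d2}(\sinh 2t)^{-\gamma_\kappa}\int_{\mathbb R^d}k_t^0(\eta,y)\,e^{-\frac{\coth(2t)}2(|x|^2-|\eta|^2)}\,d\nu_x(\eta)$, and the same formula differentiated in $y_i$ for part (ii), since only $k_t^0$ depends on $y$. The idea is to reproduce the argument of Lemma \ref{lem1}, the only novelty being the prefactor $(x_j-y_j)$, which I would split, \emph{inside} the $\nu_x$-integral, as $x_j-y_j=(\eta_j-y_j)+(x_j-\eta_j)$, keeping in mind that $\eta$ ranges over $co(G.x)$.

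\emph{The tangential part.} For the piece carrying $(\eta_j-y_j)$ I would use the refinement already visible in the proof of Lemma \ref{01}: multiplying $k_t^0(\eta,y)$, resp. $\partial_{y_i}k_t^0(\eta,y)$, by $(\eta_j-y_j)$ turns the extra factor $|\eta-y|$ into a gain $t^{1/2}$ through $|\eta-y|\,e^{-\frac{a}{2t}|\eta-y|^2}\le Ct^{1/2}$. Feeding this into the exact computation of Lemma \ref{lem1} — absorbing $e^{-\frac at|\eta-y|^2}e^{-\frac1{4t}(|x|^2-|\eta|^2)}$ into $e^{-\frac bt(|x|^2+|y|^2-2\langle y,\eta\rangle)}$ and recognising the result, via (\ref{trad}), as $\tau_x^\kappa(e^{-\frac bt|.|^2})(-y)$ — produces exactly the $\alpha=0$ summand, with bound $Ct^{-\gamma_\kappa-\frac d2+\frac12}\tau^\kappa_x(\ldots)(-y)$ for (i) and $Ct^{-\gamma_\kappa-\frac d2}\tau^\kappa_x(\ldots)(-y)$ for (ii).

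\emph{The normal part and the main difficulty.} The piece carrying $(x_j-\eta_j)$ is where the reflected terms $\tau^\kappa_{\sigma_\alpha x}$ must be generated. I would first record, using the covariance (\ref{mg}) of $\nu_x$ together with $\langle y,\sigma_\alpha\eta\rangle=\langle\sigma_\alpha y,\eta\rangle$ and $|\sigma_\alpha x|=|x|$, the identity $\tau^\kappa_{\sigma_\alpha x}(e^{-\frac ct|.|^2})(-y)=\int e^{-\frac ct|\sigma_\alpha y-\eta|^2}e^{-\frac ct(|x|^2-|\eta|^2)}\,d\nu_x(\eta)$, so that the target sum and the quantity to be estimated become integrals against the \emph{same} measure $\nu_x$. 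Bounding $|x_j-\eta_j|\le|x-\eta|$, the task reduces to dominating $|x-\eta|\,e^{-\frac at|\eta-y|^2}e^{-\frac{\coth(2t)}2(|x|^2-|\eta|^2)}$ by $Ct^{1/2}\sum_{\alpha\in\mathcal R_+}e^{-\frac ct|\sigma_\alpha y-\eta|^2}e^{-\frac ct(|x|^2-|\eta|^2)}$. On the region $|x-\eta|^2\le M(|x|^2-|\eta|^2)$ this is immediate: the weight absorbs $|x-\eta|$ with a factor $t^{1/2}$ exactly as above, feeding again the $\alpha=0$ term. The difficulty is the complementary region, where $\eta$ lies near a non-trivial vertex $g.x$ of the polytope $co(G.x)$ while $|x|^2-|\eta|^2$ stays small.

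\emph{Where the work is.} I expect this last region to be the main obstacle. A pointwise comparison of integrands genuinely fails there — for instance at $\eta=\sigma_\alpha x$ and $y=\sigma_\alpha x$ the left side is of size $|x|$ while the right side is only $O(t^{1/2})$ — so one cannot argue integrand by integrand and must instead use the Gaussian $e^{-\frac at|\eta-y|^2}$ to confine $\eta$ to an $O(\sqrt t)$ neighbourhood of $y$ and then exploit the reflection structure of $co(G.x)$: near such a vertex a suitable $\sigma_\alpha$ (a reflection factoring the element of $G$ that carries $x$ to the relevant vertex) brings $\sigma_\alpha y$ close to $\eta$, which is precisely what dominates $|x-\eta|$ by $t^{1/2}e^{-\frac ct|\sigma_\alpha y-\eta|^2}$ after integration against $\nu_x$. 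Executing this with uniform constants, and checking that the $\nu_x$-mass of the vertex neighbourhoods does not spoil the $t^{1/2}$ gain, is the delicate point. Part (ii) runs on the identical scheme with $k_t^0$ replaced by $\partial_{y_i}k_t^0$ (Lemma \ref{01}(iii)--(iv)), the power bookkeeping then giving the exponent $-\gamma_\kappa-\frac d2$ in place of $-\gamma_\kappa-\frac d2+\frac12$. Finally I would take $c=\tfrac12\min(a,\tfrac14)$ and collect the $\alpha=0$ and $\alpha\in R_+$ contributions to obtain (i) and (ii).
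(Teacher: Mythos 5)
Your tangential piece is fine, but the normal piece carrying $(x_j-\eta_j)$ --- which is where all the content of the lemma sits, since it alone could generate the reflected translates --- is not proved: you explicitly leave ``executing this with uniform constants'' open, and the sketch cannot be completed with the tools available in the paper. Near a vertex $\eta\approx g.x\approx y$ with $g$ nontrivial, the factor $|x-\eta|$ is of order $|x-g.x|$ while every Gaussian weight in sight is $O(1)$, so, as you yourself observe, any argument must extract smallness from the $\nu_x$-mass of the vertex neighbourhoods; but nothing quantitative about $\nu_x$ is available beyond positivity, total mass $1$, support in $co(G.x)$ and the covariance (\ref{mg}). Even in rank one, verifying your claimed domination at $y=-x$ requires the precise vanishing rate of the density of $\nu_x$ at the reflected endpoint (from R\"osler's explicit one-dimensional formula the mass of a relative $\delta$-neighbourhood of $-x$ is $\asymp\delta^{\kappa+1}$, versus $\asymp\delta^{\kappa}$ at $+x$, and it is exactly this asymmetry that saves the inequality); no analogue of such fine information is known for a general reflection group, and for a general $g\in G$, which is a product of several reflections, it is unclear which single $\sigma_\alpha$, $\alpha\in R_+$, should ``factor'' $g$ in your sense. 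So the proposal has a genuine gap at its central step, not merely an unchecked technicality.

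The paper avoids the region analysis altogether with an algebraic trick worth remembering: by (\ref{trad}) and (\ref{dk}) one has $\tau^\kappa_x(e^{-\frac{b}{t}|\cdot|^2})(-y)=e^{-\frac{b}{t}(|x|^2+|y|^2)}E_\kappa(x,\tfrac{2b}{t}y)$, and the product rule (\ref{id}) combined with the eigen-equation (\ref{TE}) yields the exact identity $(y_j-x_j)\,\tau^\kappa_x(e^{-\frac{b}{t}|\cdot|^2})(-y)=-\tfrac{t}{2b}\,T_j^\kappa\tau^\kappa_x(e^{-\frac{b}{t}|\cdot|^2})(-y)$, the Dunkl operator acting in $y$. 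The derivative part of $T_j^\kappa$ ($I_1$ in the paper) costs only $t^{-1/2}$ and gives the $\alpha=0$ term; the difference part ($I_2$) is controlled through the divided-difference function $\phi_t(u,v)=(1-e^{-2buv/t})/v$ with $|\phi_t(u,v)|\le C(t^{-1/2}+t^{-1}|u-v|)$, switching from $y$ to $\sigma_\alpha.y$ when $\langle\eta,\alpha\rangle\langle y,\alpha\rangle<0$, and the change of variables $\eta\mapsto\sigma_\alpha.\eta$ via (\ref{mg}) then produces the reflected translates $\tau^\kappa_{\sigma_\alpha.x}$ with the desired net $t^{1/2}$ gain; parts (i) and (ii) follow by inserting this into Lemma \ref{lem1}(i),(iii). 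In other words, the reflected terms arise algebraically from the difference part of the Dunkl operator, not from the geometry of $co(G.x)$ near its extreme points --- converting the polynomial factor into a Dunkl derivative, rather than splitting it inside the $\nu_x$-integral, is the step your plan is missing.
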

    \begin{proof}
    In view of $(i)$ and $(iii)$ of Lemma \ref{lem1} it is enough to show that for some constant $c>0$,
    \begin{eqnarray}\label{tau}
     |y_j-x_j|\tau^\kappa_x(e^{-\frac{b}{ t}|.|^2})(-y)\leq Ct^{\frac{1}{2}}\sum_{\alpha\in R_+}\tau^\kappa_{\sigma_\alpha.x}(e^{-\frac{c}{ t}|.|^2})(-y).
    \end{eqnarray}
Making use of  (\ref{trad}), (\ref{dk}), (\ref{id})  and (\ref{TE})  we see that
     \begin{eqnarray}\label{aa}\nonumber
    (y_j-x_j)\tau^\kappa_x(e^{-\frac{b}{ t}|.|^2})(-y)&=& (y_j-x_j)e^{-\frac{b}{ t}(|x|^2+|y|^2)}E_\kappa(x,\frac{2b}{t}y)\\&=&
-\frac{t}{2b} T_j^\kappa\tau^\kappa_x(e^{-\frac{b}{ t}|.|^2})(-y).
\end{eqnarray}
However,
\begin{eqnarray*}
T_j^\kappa\tau^\kappa_x(e^{-\frac{b}{ t}|.|^2})(-y)&=&-\frac{2b}{t}\int_{\mathbb{R}^d}(y_j-\eta_j)e^{-\frac{b}{t}(|y-\eta|^2+|x|^2-|\eta|^2)}d\nu_x(\eta)\\&+&
\sum_{\alpha\in R_+}\kappa(\alpha)\alpha_j\int_{\mathbb{R}^d}\frac{e^{-\frac{b}{t}|y-\eta|^2}-e^{-\frac{b}{t}|\sigma_\alpha.y-\eta|^2}}{\langle y,\alpha\rangle}e^{-\frac{b}{t}(|x|^2-|\eta|^2)}d\nu_x(\eta)\\&=& I_1(x,y)+I_2(x,y).
\end{eqnarray*}
Then  we are going to estimate $I_1(x,y)$ and $I_2(x,y)$.
 It follows first that
 \begin{eqnarray}\label{I1}\nonumber
|I_1(x,y)|&\leq& \frac{2b}{t}\int_{\mathbb{R}^d}|y-\eta|e^{-\frac{b}{t}(|y-\eta|^2+|x|^2-|\eta|^2)}d\nu_x(\eta)\\&\leq&\nonumber
C t^{-\frac{1}{2}}\int_{\mathbb{R}^d}e^{-\frac{b}{2t}(|y-\eta|^2+|x|^2-|\eta|^2)}d\nu_x(\eta)\\&=&
C t^{-\frac{1}{2}}\tau^\kappa_x(e^{\frac{-b}{ 2t}|.|^2})(-y).
\end{eqnarray}
To establish a similar  estimate for $I_2(x,y)$ we need to estimate the function
$$\phi_t(u,v)=\frac{1-e^{-\frac{2b}{t}uv}}{v}\qquad u,v\in\mathbb{R}, \; uv\geq 0,$$
since  in view of (\ref{sigma}) we can write
\begin{eqnarray}\label{ye}
\frac{e^{-\frac{b}{t}|y-\eta|^2}-e^{-\frac{b}{t}|\sigma_\alpha.y-\eta|^2}}{\langle y,\alpha\rangle}=
e^{-\frac{b}{t}|y-\eta|^2}\left\{\frac{1-e^{-\frac{2b}{t}\langle\eta,\alpha\rangle\langle y,\alpha\rangle}}{\langle y,\alpha\rangle}\right\}\nonumber\\=
e^{-\frac{b}{t}|y-\eta|^2}\phi_t\Big(\langle\eta,\alpha\rangle,\langle y,\alpha\rangle\Big).
\end{eqnarray}
We proceed as follows: if $|u|\leq 2|v|$ then
$$|\phi_t(u,v)|= \frac{1-e^{-\frac{2b}{t}|u||v|}}{|v|}\leq\frac{1-e^{-\frac{4b}{t}v^2}}{|v|}\leq Ct^{-\frac{1}{2}}$$
and when $|u|> 2|v|$
$$|\phi_t(u,v)|= \frac{1-e^{-\frac{2b}{t}|u||v|}}{|v|}
\leq Ct^{-1} |u|\leq 2Ct^{-1}(|u|-|v|)=2Ct^{-1}|u-v|. $$
 Hence, we obtain
\begin{eqnarray}\label{phi}
|\phi_t(u,v)|\leq C(t^{-\frac{1}{2}}+t^{-1}|u-v|),\qquad uv\geq0.
\end{eqnarray}
It follows from  (\ref{ye}) and  (\ref{phi}) that when $\langle\eta,\alpha\rangle\langle y,\alpha\rangle\geq 0$,
\begin{eqnarray}\label{1}\nonumber
\left|\frac{e^{-\frac{b}{t}|y-\eta|^2}-e^{-\frac{b}{t}|\sigma_\alpha.y-\eta|^2}}{\langle y,\alpha\rangle}\right|
&\leq &C\Big(t^{-\frac{1}{2}}+t^{-1}|\langle y-\eta,\alpha\rangle|\Big)e^{-\frac{b}{t}|y-\eta|^2}\\&\leq&
C\Big(t^{-\frac{1}{2}}+\sqrt{2}t^{-1}| y-\eta|\Big)e^{-\frac{b}{t}|y-\eta|^2}\nonumber\\&\leq&
Ct^{-\frac{1}{2}}e^{-\frac{b}{2t}|y-\eta|^2}.
\end{eqnarray}
However,  when $\langle\eta,\alpha\rangle\langle y,\alpha\rangle\leq0 $  we have that  $\langle\eta,\alpha\rangle\langle \sigma_\alpha.y,\alpha\rangle\geq 0$, since
 $$\langle \sigma_\alpha.y,\alpha\rangle=\langle y, \sigma_\alpha.\alpha\rangle=-\langle y,\alpha\rangle.$$ In addition,
 (\ref{ye})  is invariant under replacing  $y$ by $\sigma_\alpha.y$, hence  we can write
\begin{eqnarray*}
\frac{e^{-\frac{b}{t}|y-\eta|^2}-e^{-\frac{b}{t}|\sigma_\alpha.y-\eta|^2}}{\langle y,\alpha\rangle}&=&
e^{-\frac{b}{t}|\sigma_\alpha.y-\eta|^2}\phi_t\Big(\langle\eta,\alpha\rangle,\langle \sigma_\alpha.y,\alpha\rangle\Big)
\end{eqnarray*}
and we also have
\begin{eqnarray}\label{2}
\left|\frac{e^{-\frac{b}{t}|y-\eta|^2}-e^{-\frac{b}{t}|\sigma_\alpha.y-\eta|^2}}{\langle y,\alpha\rangle}\right|
\leq  Ct^{-\frac{1}{2}}e^{-\frac{b}{2t}|\sigma_\alpha.y-\eta|^2}.
\end{eqnarray}
Combining (\ref{1}) and (\ref{2}) yields
\begin{eqnarray*}
\left|\frac{e^{-\frac{b}{t}|y-\eta|^2}-e^{-\frac{b}{t}|\sigma_\alpha.y-\eta|^2}}{\langle y,\alpha\rangle}\right|
\leq  Ct^{-\frac{1}{2}}(e^{-\frac{b}{2t}|y-\eta|^2}+e^{-\frac{b}{2t}|\sigma_\alpha.y-\eta|^2}).
\end{eqnarray*}
Therefore,
$$\int_{\mathbb{R}^d}\left|\frac{e^{-\frac{b}{t}|y-\eta|^2}-e^{-\frac{b}{t}|\sigma_\alpha.y-\eta|^2}}{\langle y,\alpha\rangle}\right|e^{-\frac{b}{t}(|x|^2-|\eta|^2)}d\nu_x(\eta)\qquad\qquad\qquad\qquad\qquad\qquad\qquad$$
\begin{eqnarray*}
&\leq&Ct^{-\frac{1}{2}}\left(
\int_{\mathbb{R}^d}e^{-\frac{b}{2t}(|y-\eta|^2+|x|^2-|\eta|^2)}d\nu_x(\eta)
+\int_{\mathbb{R}^d}e^{-\frac{b}{2t}(|\sigma_\alpha.y-\eta|^2+|x|^2-|\eta|^2)}d\nu_x(\eta)\right)\\&=&Ct^{-\frac{1}{2}}\Big(\tau^\kappa_xe^{-\frac{b}{2 t}|.|^2})(-y)+\tau^\kappa_{x}(e^{-\frac{b}{2 t}|.|^2})(-\sigma_\alpha.y)\Big)
\\&=&Ct^{-\frac{1}{2}}\Big(\tau^\kappa_{x}(e^{-\frac{b}{2 t}|.|^2})(-y)+\tau^\kappa_{\sigma_\alpha.x}(e^{-\frac{b}{2 t}|.|^2})(-y)\Big)
\end{eqnarray*}
where the second equality follows be a simple change of variable and (\ref{mg}).
 Thus we obtain
\begin{eqnarray}\label{I2}
|I_2(x,y)|\leq Ct^{-\frac{1}{2}}\sum_{\alpha\in \mathcal{R}_+}\tau^\kappa_{\sigma_\alpha.x}(e^{-\frac{b}{2 t}|.|^2})(-y).
\end{eqnarray}
Now, in view of (\ref{aa}) the estimates (\ref{I2}) and (\ref{I1})yield  (\ref{tau}) with $c=\frac{b}{2 }$.
\end{proof}
 In what follow we put
\begin{eqnarray}
\nonumber K_j(x,y)&=&\frac{1}{\sqrt{\pi}}\int_0^{+\infty}\delta_j^\kappa k_t(x,y)\frac{dt}{\sqrt{t}}\\&=&\label{k_j}
 \frac{1}{\sqrt{\pi}}\int_0^{+\infty}k_t(x,y)\Big((1-\coth (2t) )x_j+ \frac{1}{\sinh (2t) }y_j\Big)\frac{dt}{\sqrt{t}}.
  \end{eqnarray}
where $\delta^\kappa_j$ is taken with respect to the variable $x$. The last equality follows from  (\ref{eE}), (\ref{id} and  (\ref{TE}).
\begin{lemma}\label{lem4}
 For all $x,y\in \mathbb{R}^{d}$, $y\notin G.x$, the integral (\ref{k_j}) converge absolutely and
 \begin{eqnarray}\label{min}
|K_j(x,y)|\leq  C\min_{g\in G}|y-g.x|^{-2\gamma_\kappa-d}
 \end{eqnarray}
  \end{lemma}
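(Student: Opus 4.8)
The plan is to estimate the integrand $\delta_j^\kappa k_t(x,y)$ separately for small and large $t$ and then integrate in $t$. First I would rewrite the coefficient in (\ref{k_j}) using the half-angle identity $\frac{1-\cosh 2t}{\sinh 2t}=-\tanh t$ to get
$$\delta_j^\kappa k_t(x,y)=k_t(x,y)\Big(\frac{y_j-x_j}{\sinh 2t}+x_j(1-\tanh t)\Big),$$
which isolates a ``difference'' factor $y_j-x_j$ (to be handled by Lemma \ref{lem3}) and a ``diagonal'' factor $x_j$ (to be handled by Lemma \ref{lem1} together with the symmetry $k_t(x,y)=k_t(y,x)$ visible in (\ref{eE})). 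I would then split $\int_0^{\infty}=\int_0^1+\int_1^{\infty}$.

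The decisive observation, and the step that does the real work, is a pointwise Gaussian bound for the translated exponentials appearing in Lemmas \ref{lem1} and \ref{lem3}. Writing out (\ref{trad}) gives
$$\tau_x^\kappa(e^{-\frac{c}{t}|.|^2})(-y)=\int_{\mathbb{R}^d}e^{-\frac{c}{t}(|x|^2+|y|^2-2\langle y,\eta\rangle)}\,d\nu_x(\eta),$$
and since $\nu_x$ is supported in $co(G.x)$ every $\eta$ is a convex combination $\eta=\sum_{g\in G}\lambda_g\,g.x$; expanding each $|y-g.x|^2$ shows $|x|^2+|y|^2-2\langle y,\eta\rangle=\sum_g\lambda_g|y-g.x|^2\geq\min_{g\in G}|y-g.x|^2$. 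Hence, with $d(x,y):=\min_{g\in G}|y-g.x|$,
$$\tau_x^\kappa(e^{-\frac{c}{t}|.|^2})(-y)\leq e^{-\frac{c}{t}\,d(x,y)^2},$$
and the same bound holds for every $\tau_{\sigma_\alpha.x}^\kappa$ (because $d(\sigma_\alpha.x,y)=d(x,y)$) and, after exchanging the roles of $x$ and $y$, for $\tau_y^\kappa(\cdot)(-x)$ (because $\min_g|x-g.y|=d(x,y)$).

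For $0<t\leq 1$ I would use $\tfrac{1}{\sinh 2t}\leq Ct^{-1}$ and $0\leq1-\tanh t\leq1$: the first term is controlled by Lemma \ref{lem3}(i) and the second by Lemma \ref{lem1}(ii) applied after the symmetry $k_t(x,y)=k_t(y,x)$, so that with the translation bound above $|\delta_j^\kappa k_t(x,y)|\leq Ct^{-\gamma_\kappa-\frac{d}{2}-\frac12}e^{-\frac{c}{t}d(x,y)^2}$. Integrating against $\tfrac{dt}{\sqrt t}$ and enlarging the range to $(0,\infty)$ reduces matters to the Gamma integral $\int_0^{\infty}t^{-\gamma_\kappa-\frac{d}{2}-1}e^{-\frac{c}{t}d(x,y)^2}\,dt=C\,d(x,y)^{-2\gamma_\kappa-d}$, which simultaneously gives the absolute convergence near $t=0$ since $d(x,y)>0$ when $y\notin G.x$.

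For $t>1$ the coefficients decay, $\tfrac{1}{\sinh 2t}\leq Ce^{-2t}$ and $1-\tanh t\leq Ce^{-2t}$, so Lemma \ref{lem1}(v)--(vi) (again with the symmetry to treat the $x_j$ factor) yield $|\delta_j^\kappa k_t(x,y)|\leq Ce^{-(2\gamma_\kappa+d+2)t}e^{-c\,d(x,y)^2}$; the $t$-integral converges and leaves $Ce^{-c\,d(x,y)^2}$, which is $\leq C\,d(x,y)^{-2\gamma_\kappa-d}$ (trivially for $d(x,y)\leq1$, and by exponential-versus-polynomial decay for $d(x,y)>1$). Adding the two contributions gives (\ref{min}). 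The only genuine obstacle is the convexity identity turning the translation objects into a clean Gaussian factor $e^{-c\,d(x,y)^2/t}$; once that is in hand, everything collapses to the two elementary $t$-integrals above.
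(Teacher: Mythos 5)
Your proof is correct and follows essentially the same route as the paper: the same rearrangement of the coefficient via the half-angle identity into a difference term plus a linear term, the same appeal to Lemma \ref{lem3}(i) and Lemma \ref{lem1} on $(0,1]$ and to Lemma \ref{lem1}(v)--(vi) with the symmetry $k_t(x,y)=k_t(y,x)$ on $(1,+\infty)$, and the same scaling of the $t$-integral yielding $\min_{g\in G}|y-g.x|^{-2\gamma_\kappa-d}$. Your pointwise Gaussian bound $\tau^\kappa_x(e^{-\frac{c}{t}|.|^2})(-y)\leq e^{-\frac{c}{t}\min_{g\in G}|y-g.x|^2}$ is exactly the lower half of the paper's inequality (\ref{sq}) (quoted there from R\"osler), which you rederive by convexity; the only cosmetic differences are that you attach the non-difference linear factor to $x_j$ (handled by symmetry) where the paper attaches it to $y_j$, and that you apply this bound before integrating in $t$ while the paper applies it after.
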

 \begin{proof}
Let us observe  that for $x,y\in\mathbb{R}^d$ and  $\eta\in co(G.x)$ (see \cite{R3}, Th. 5.1)
\begin{eqnarray}\label{sq}
\min_{g\in G}|y-g.x|^2\leq|x|^2+|y|^2-2\langle y,\eta\rangle\leq \max_{g\in G}|y-g.x|^2.
\end{eqnarray}
Using  Lemma  \ref{lem3}-$(i)$, Lemma \ref{lem1}-$(iii)$,  (\ref{trad}) and (\ref{sq}) we have for some constant $c>0$ \\\\
 $\displaystyle{\int_0^{1}\Big|k_t(x,y)\Big((1-\coth(2t))x_j+ \frac{1}{\sinh(2t)}y_j\Big)\Big|\frac{dt}{\sqrt{t}}
}$
 \begin{eqnarray}\label{m}\nonumber
 &=& \int_0^{1}\Big|k_t(x,y)\Big((1-\coth(2t))(x_j-y_j) +(1-\tanh (t)) y_j\Big)\Big|\frac{dt}{\sqrt{t}}
  \\\nonumber&\leq&
 C\;\sum_{\alpha\in \mathcal{R}_+}\int_0^{1}   t^{-\gamma_\kappa-\frac{d+2}{2}}\;\tau^\kappa_{\sigma_\alpha.x}(e^{-\frac{c}{ t}|.|^2})(-y) dt  \\
 &=&\nonumber C\;\sum_{\alpha\in \mathcal{R}_+}\int _{\mathbb{R}^d}\left(\int_0^{1}t^{-\gamma_\kappa-\frac{d+2}{2}}e^{-\frac{c}{t}(|x|^2+|y|^2-2\langle y,\eta\rangle)}dt\right)d\nu_{\sigma_\alpha.x}(\eta)
 \\&\leq& \nonumber C \;\int_0^{ +\infty }   u^{-\gamma_\kappa-\frac{d+2}{2} }\;  e^{-\frac{c}{ u}}   du \;\sum_{\alpha\in \mathcal{R}_+}\int _{\mathbb{R}^d}(|x|^2+|y|^2-2\langle y,\eta\rangle)^{  -\gamma_\kappa-\frac{d}{2} } d\nu_{\sigma_\alpha.x}(\eta)
 \\ &\leq & C\;\min_{g\in G}|y-g.x|^{-2\gamma_\kappa-d}\;.
 \end{eqnarray}
In addition,  from Lemma \ref{lem1}-$(iii)$ it follows  that\\\\
$\displaystyle{\int_1^{+\infty}\Big|k_t(x,y)\Big((1-\coth2t)x_j+ \frac{1}{\sinh2t}y_j\Big)\Big|\frac{dt}{\sqrt{t}}}$
\begin{eqnarray}\label{n}
\leq C \tau^\kappa_x(e^{ -c |.|^2})(-y)  \leq C e^{ -c \min_{g\in G}|y-g.x|^2}\leq C\;\min_{g\in G}|y-g.x|^{-2\gamma_\kappa-d}\;.
\end{eqnarray}
Note here that as the kernel $k_t$ is symmetric we have used
$$|x_j k_t(x,y)|= |x_jk_t(y,x)|\leq  C \tau^\kappa_y(e^{ -c |.|^2})(-x)= C \tau^\kappa_x(e^{ -c |.|^2})(-y),$$
since
$$\tau^\kappa_x(e^{ -c |.|^2})(-y)=e^{-c(|x|^2+|y|^2)}E_\kappa(2cy,x)$$
which is also  symmetric with respect to variables $x $ and $y$.
We immediately get (\ref{min}) from (\ref{m}) and  (\ref{n}).
\end{proof}
\begin{prop}\label{prop1}
The Riesz transform $R_j^\kappa$ satisfies
\begin{eqnarray}\label{io}
R_j^\kappa(f)(x)=\int_{\mathbb{R}^d}K_j(x,y)f(y)d\mu_\kappa(y)
\end{eqnarray}
for all compactly supported function $f\in L^2(\mathbb{R}^d; d\mu_\kappa)$ with  $G.x \cap supp(f)=\emptyset$.
\end{prop}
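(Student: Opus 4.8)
The plan is to represent $L_\kappa^{-1/2}$ by subordination to the Hermite semigroup, to differentiate the kernel $k_t$ by $\delta_j^\kappa$ explicitly, and then to use the pointwise estimates of Lemmas \ref{lem1} and \ref{lem4} to justify interchanging $\delta_j^\kappa$ with the temporal and spatial integrations. First I would record the elementary identity $\lambda^{-1/2}=\tfrac{1}{\sqrt\pi}\int_0^\infty e^{-t\lambda}\,t^{-1/2}\,dt$ ($\lambda>0$) and apply it through the spectral decomposition of $L_\kappa$, whose eigenvalues $2|n|+2\gamma_\kappa+d\ge d>0$ keep everything finite, to obtain
\[
L_\kappa^{-\frac12}(f)=\frac{1}{\sqrt\pi}\int_0^\infty e^{-tL_\kappa}(f)\,\frac{dt}{\sqrt t},\qquad f\in L^2(\mathbb{R}^d,d\mu_\kappa),
\]
with convergence in $L^2$. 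Consequently $R_j^\kappa f=\delta_j^\kappa L_\kappa^{-1/2}f$ should be the $L^2$-limit, as $\eps\to0$ and $N\to\infty$, of the truncations $T_j^{\eps,N}f=\tfrac{1}{\sqrt\pi}\int_\eps^N\delta_j^\kappa e^{-tL_\kappa}(f)\,\tfrac{dt}{\sqrt t}$. On the dense subspace $\mathcal{D}$ this is transparent, since $\delta_j^\kappa$ maps $h_n^\kappa$ into a finite combination of Hermite functions while $e^{-tL_\kappa}$ only rescales by $e^{-t(2|n|+2\gamma_\kappa+d)}$; moreover the computation behind the $L^2$-boundedness of $R_j^\kappa$ proved above shows the $T_j^{\eps,N}$ to be uniformly bounded on $L^2$, so the strong convergence $T_j^{\eps,N}\to R_j^\kappa$ extends from $\mathcal{D}$ to every $f\in L^2(\mathbb{R}^d,d\mu_\kappa)$.

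Now I would fix a point $x$ with $G.x\cap\mathrm{supp}(f)=\emptyset$ and set $\delta=\min_{g\in G}\mathrm{dist}(g.x,\mathrm{supp}\,f)$, which is positive because $\mathrm{supp}\,f$ is compact and $G$ finite. On a finite interval $[\eps,N]$ the map $t\mapsto e^{-tL_\kappa}f$ is smooth, and I would move $\delta_j^\kappa$ (acting in the variable $x$) under the spatial integral: for the differential part this is differentiation under the integral sign, and for the multiplication and reflection parts of $\delta_j^\kappa$ it is mere linearity. The explicit evaluation
\[
\delta_j^\kappa k_t(x,y)=k_t(x,y)\Big((1-\coth(2t))\,x_j+\tfrac{1}{\sinh(2t)}\,y_j\Big),
\]
which follows from \eqref{eE}, the product rule \eqref{id} applied to the $G$-invariant Gaussian factor, and the eigenrelation \eqref{TE} for $E_\kappa$, together with the bounds of Lemma \ref{lem1}, provides on $[\eps,N]$ an integrable majorant for the integrand, so that $\delta_j^\kappa e^{-tL_\kappa}f(x)=\int_{\mathbb{R}^d}\delta_j^\kappa k_t(x,y)f(y)\,d\mu_\kappa(y)$.

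Next, for $y\in\mathrm{supp}\,f$, Lemma \ref{lem4} gives $\int_0^\infty|\delta_j^\kappa k_t(x,y)|\,\tfrac{dt}{\sqrt t}\le C\min_{g\in G}|y-g.x|^{-2\gamma_\kappa-d}\le C\delta^{-2\gamma_\kappa-d}$, so the double integral in $(t,y)$ over $[\eps,N]\times\mathbb{R}^d$ is absolutely convergent and Fubini yields
\[
T_j^{\eps,N}f(x)=\int_{\mathbb{R}^d}\Big(\frac{1}{\sqrt\pi}\int_\eps^N\delta_j^\kappa k_t(x,y)\,\frac{dt}{\sqrt t}\Big)f(y)\,d\mu_\kappa(y).
\]
Letting $\eps\to0$ and $N\to\infty$, the same Lemma \ref{lem4} bound supplies the $L^1$-dominating function $C\delta^{-2\gamma_\kappa-d}|f(y)|$ (note that $f\in L^2$ with compact support lies in $L^1(d\mu_\kappa)$), so dominated convergence sends the right-hand side to $\int_{\mathbb{R}^d}K_j(x,y)f(y)\,d\mu_\kappa(y)$ for every such $x$, while the left-hand side tends to $R_j^\kappa f$ in $L^2$, hence a.e.\ along a subsequence. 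Matching the two limits gives \eqref{io} for a.e.\ $x$ outside $G.\mathrm{supp}(f)$.

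I expect the only real difficulty to be the legitimacy of pulling the unbounded, non-local operator $\delta_j^\kappa$ through both the subordination integral in $t$ and the integration in $y$; this is precisely what the uniform estimate of Lemma \ref{lem4} — the absolute convergence of $\int_0^\infty|\delta_j^\kappa k_t(x,y)|\,\tfrac{dt}{\sqrt t}$ away from the orbit $G.x$ — is tailored to resolve, reducing the remaining work to standard applications of Fubini's theorem and dominated convergence.
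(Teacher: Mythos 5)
Your proposal is correct, but it identifies $R_j^\kappa f$ with the kernel integral by a genuinely different mechanism than the paper. The paper works with the untruncated subordination integral: it first checks, via Lemma \ref{lem1}, the absolute convergence of $\int_0^{+\infty}\int_{\mathbb{R}^d}|k_t(x,y)f(y)|\,d\mu_\kappa(y)\,t^{-1/2}dt$ for $x$ off $G.\mathrm{supp}(f)$; it then justifies applying $\delta_j^\kappa$ directly under this improper integral by constructing, on a small ball $B_x$ with $\min_{g\in G}|g.z-y|^2\geq\rho$ for $z\in B_x$ and $y\in \mathrm{supp}(f)$, an explicit integrable majorant $\omega(t)$ for $t^{-1/2}\partial_{x_j}k_t$; and finally it identifies the resulting pointwise-defined function $\delta_j^\kappa L_\kappa^{-1/2}f$ with $R_j^\kappa f$ by duality, pairing against a test function $\varphi$ satisfying $G.x\cap \mathrm{supp}(f)=\emptyset$ for all $x\in \mathrm{supp}(\varphi)$, moving $\delta_j^\kappa$ onto $\varphi$ through the anti-symmetry relation (\ref{Du}), and approximating $f$ by $f_n\in\mathcal{D}$ using the $L^2$-continuity of $L_\kappa^{-1/2}$ and $R_j^\kappa$. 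You instead truncate in $t$, prove strong $L^2$-convergence $T_j^{\eps,N}\to R_j^\kappa$, apply Fubini only on the compact rectangle $[\eps,N]\times\mathbb{R}^d$, and match an a.e.\ convergent subsequence against the pointwise limit furnished by dominated convergence and Lemma \ref{lem4}. Both routes rest on the same kernel estimates; yours trades the paper's duality step and the $\omega(t)$ majorant for the truncation device, at the price of obtaining (\ref{io}) only almost everywhere on $\{x:\,G.x\cap \mathrm{supp}(f)=\emptyset\}$ — which is all Theorem \ref{th1} requires, and indeed all that is meaningful since $R_j^\kappa f$ is an $L^2$-class. One detail you should make explicit: the uniform bound $\sup_{\eps,N}\|T_j^{\eps,N}\|_{L^2\to L^2}<\infty$ does not follow verbatim from the energy computation for $R_j^\kappa$; it follows by writing $T_j^{\eps,N}=R_j^\kappa\, m_{\eps,N}(L_\kappa)$, where
\begin{equation*}
m_{\eps,N}(\lambda)=\frac{\lambda^{1/2}}{\sqrt{\pi}}\int_\eps^N e^{-t\lambda}\,\frac{dt}{\sqrt{t}}\in[0,1],
\end{equation*}
so that by the Spectral Theorem $\|T_j^{\eps,N}f\|_{2,\kappa}\leq \sqrt{2}\,\|f\|_{2,\kappa}$, after which your density argument on $\mathcal{D}$ goes through as stated.
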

\begin{proof}
According to Lemma \ref{lem4} the integral (\ref{io}) converges absolutely. In order, to establish  (\ref{io})
we first  write  $L_\kappa^{-\frac{1}{2}}$ in the following way
 \begin{eqnarray*}
L_\kappa^{-\frac{1}{2}}(f)(x)&=&\frac{1}{\sqrt{\pi}}\int_0^{+\infty}e^{-tL_\kappa}(f)(x)\frac{dt}{\sqrt{t}}.
\end{eqnarray*}
Then from   (\ref{L_k}) one has
\begin{eqnarray}\label{L}
L_\kappa^{-\frac{1}{2}}(f)(x)
 = \frac{1}{\sqrt{\pi}}\int_0^{+\infty}\int_{\mathbb{R}^d}k_t(x,y)f(y)d\mu_\kappa(y)\frac{dt}{\sqrt{t}}.
\end{eqnarray}
As in the proof of Lemma \ref{lem4} this  integral converge absolutely
when  $G.x\cap supp(f)=\emptyset$. Indeed, by   $(i)$ and $(v)$ of Lemma
\ref{lem1},\\\\
$\displaystyle{
 \int_0^{1}\int_{\mathbb{R}^d}\Big|k_t(x,y)f(y)\Big|d\mu_\kappa(y)\frac{dt}{\sqrt{t}}}$
\begin{eqnarray*}
&\leq &
 C \int_{\mathbb{R}^d}\int_0^{1}t^{-\gamma_\kappa-\frac{d}{2}}\;\tau^\kappa_x(e^{-\frac{b}{ t}|.|^2})(-y)  |f(y)|\frac{dt}{\sqrt{t}}d\mu_\kappa(y)
 \\&=&C \int_{\mathbb{R}^d}\int_{\mathbb{R}^d}\int_0^{1}t^{-\gamma_\kappa-\frac{d}{2}} e^{-\frac{b}{ t}(|x|^2+|y|^2-2\langle y,\eta\rangle)} |f(y)|\frac{dt}{\sqrt{t}}d\nu_x(\eta)d\mu_\kappa(y)\\
  \\&\leq&C \int_{\mathbb{R}^d}  \frac{|f(y)|}{\min_{g\in G}|y-g.x|^{2\gamma_\kappa+d-1}}  d\mu_\kappa(y)
  \end{eqnarray*}
and
\\\\
$\displaystyle{\int_1^{+\infty}\int_{\mathbb{R}^d}\Big|k_t(x,y)f(y)\Big|d\mu_\kappa(y)\frac{dt}{\sqrt{t}}}$
\begin{eqnarray*}
  &\leq &C \int_{\mathbb{R}^d}\int_1^{+\infty}e^{(-2\gamma_\kappa-d)t}\;\tau^\kappa_x(e^{ -b |.|^2 })(-y) |f(y)|\frac{dt}{\sqrt{t}}d\mu_\kappa(y)
 \\&=&C \int_{\mathbb{R}^d}\int_{\mathbb{R}^d}\int_1^{+\infty}e^{(-2\gamma_\kappa-d)t} e^{-b (|x|^2+|y|^2-2\langle y,\eta\rangle)} |f(y)|\frac{dt}{\sqrt{t}}d\nu_y(\eta)d\mu_\kappa(y)\\
 &\leq&  \int_{\mathbb{R}^d} \int_1^{+\infty}e^{(-2\gamma_\kappa-d)t}  |f(y)|\frac{dt}{\sqrt{t}}d\nu_y(\eta)d\mu_\kappa(y)\\&\leq&C \int_{\mathbb{R}^d}  |f(y)|d\mu_\kappa(y).
  \end{eqnarray*}
In the next, we will show that we can differentiate the integral (\ref{L}). Making use of  Lemma \ref{lem1}-$(iii) $ and (\ref{sq}), we deduce from the symmetry of the kernel $k_t$ that
$$\Big|\frac{\partial k_t}{\partial x_j} (x,y)\Big|=\Big|\frac{\partial k_t }{\partial y_j} (y,x)\Big|\leq
\left\{
  \begin{array}{ll}
     & \hbox{$  t^{-\gamma_\kappa-\frac{d+1}{2}}e^{-\frac{b}{t}\min_{g\in G}|g.x-y|^2 }$;\;  if $0<t<1$.}\\
& \hbox{$e^{-(2\gamma_\kappa+d)t}e^{-b\min_{g\in  G}|g.x-y|^2 }$;\; if $t\geq1$.}
 \end{array}
\right.
$$
But since the function $f$ has compact support and  $g.x\notin supp(f)$ for all $g\in G$ which is finite group, then we can find a ball $B_x$ centered at $x$ and $\rho>0$ such that for all $z\in B_x$, $g\in G$ and $y\in supp(f)$ we have that
$$|g.z-y|^2\geq \rho$$
 and 
$$\left|\frac{1}{\sqrt{t}}\frac{\partial k_t}{\partial x_j} (z,y)\right|\leq \omega(t)=
\left\{
  \begin{array}{ll}
     & \hbox{$  t^{-\gamma_\kappa-\frac{d+2}{2}}e^{-\frac{b}{t}\rho}$;\;  if $0<t<1$,}\\
& \hbox{$e^{-(2\gamma_\kappa+d+\frac{1}{2})t}e^{-b\rho}$;\; if $t\geq1$.}
 \end{array}
\right.
$$
Clearly $\omega$ is integrable on $]0,+\infty[$, which  allows us to differentiate  the integral (\ref{L}) with respect to variable $x_j$ and one has
 $$\delta_j^\kappa L_\kappa^{-\frac{1}{2}}(f)(x)= \frac{1}{\sqrt{\pi}}\int_{\mathbb{R}^d}\int_0^{+\infty}\delta_j^\kappa k_t(x,y)f(y)d\mu_\kappa(y)\frac{dt}{\sqrt{t}}.$$
Now to conclude the theorem  we proceed a follows: Let $(f_n)_n$ be a sequence of functions in $\mathcal{D}$ which converges to $f$ in $L^2(\mathbb{R}^d,d\mu_\kappa)$
and $\varphi$ be a $C^\infty$- function with compact support such that $G.x \cap supp(f)=\emptyset$ for all $x\in supp(\varphi)$ . By continuity of the operators $L_\kappa^{-\frac{1}{2}}$ and
$R_j^\kappa$ on $L^2(\mathbb{R}^d,d\mu_\kappa)$ with the use of (\ref{Du}) and (\ref{def}) we get that
\begin{eqnarray*}
\langle\delta_j^\kappa L_\kappa^{-\frac{1}{2}}(f),\varphi \rangle_\kappa&=&\langle L_\kappa^{-\frac{1}{2}}(f),\delta_j^{\kappa^*}\varphi \rangle_\kappa\\
&=& \lim \langle L_\kappa^{-\frac{1}{2}}(f_n),\delta_j^{\kappa^*}\varphi\rangle_\kappa\\
&=&\lim \langle\delta_j^{\kappa}L_\kappa^{-\frac{1}{2}}(f_n),\varphi \rangle_\kappa\\
&=&\lim \langle R_j^\kappa(f_n),\varphi\rangle_\kappa\\
&=&\langle R_j^\kappa(f),\varphi \rangle_\kappa.
\end{eqnarray*}
Since $\varphi$  is arbitrary then we have  $ R_j^\kappa(f)(x)=\delta_j^\kappa L_\kappa^{-\frac{1}{2}}(f)(x)$ for all $x\in \mathbb{R}^d$ with $G.x \cap supp(f)=\emptyset$ which proves (\ref{io}).
\end{proof}
\begin{prop}\label{prop2}
There exists
 $C>0$ such that for all $y,y_0\in\mathbb{R}^d$,
\begin{eqnarray}\label{K1}
\int_{\min_{g\in G}|g.x-y|>2|y-y_0|}\Big|K_j(x,y)-K_j(x,y_0)\Big| d\mu_\kappa(x)\leq C
\end{eqnarray}
and
\begin{eqnarray} \label{K2}
\int_{\min_{g\in G}|g.x-y|>2|y-y_0|}\Big|K_j(y,x)-K_j( y_0,x)\Big| d\mu_\kappa(x)\leq C.
\end{eqnarray}
\end{prop}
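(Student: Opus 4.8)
The plan is to deduce both Hörmander conditions from a single pointwise \emph{gradient} estimate on $K_j$ combined with the fundamental theorem of calculus, integrating the resulting bound over the relevant region by means of the $\tau$-representation furnished by Lemmas \ref{lem1} and \ref{lem3}. Throughout write $r=|y-y_0|$, $y_s=y_0+s(y-y_0)$ for $s\in[0,1]$, $\rho(x)=\min_{g\in G}|g.x-y|$ and $A=\set{x:\rho(x)>2r}$. I first observe that (\ref{K1}) and (\ref{K2}) are of the same nature. Using the symmetry $k_t(x,y)=k_t(y,x)$, the identity $\coth(2t)-\tanh(t)=\frac{1}{\sinh(2t)}$ and (\ref{k_j}), the kernel in its \emph{first} slot can be rewritten as $K_j(y,x)=\frac{1}{\sqrt\pi}\int_0^{+\infty}k_t(x,y)\big(\tfrac{1}{\sinh(2t)}(x_j-y_j)+(1-\tanh t)y_j\big)\frac{dt}{\sqrt t}$, so that differentiating $K_j(y,x)$ in its first variable $y$ produces terms of the same type and of the same $t$-order as differentiating $K_j(x,y)$ in its second variable. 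Hence it suffices to treat (\ref{K1}); (\ref{K2}) then follows verbatim after this rewriting.

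The first step is therefore the gradient bound: for $y\notin G.x$,
\[
|\nabla_y K_j(x,y)|\le C\sum_{\alpha\in\mathcal{R}_+}\int_0^1 t^{-\gamma_\kappa-\frac d2-\frac32}\,\tau^\kappa_{\sigma_\alpha.x}(e^{-\frac ct|.|^2})(-y)\,dt+(\text{a rapidly decaying $t>1$ term}).
\]
To obtain it I differentiate (\ref{k_j}) under the integral sign, which is legitimate away from $G.x$ by the dominated-convergence argument already used in the proof of Proposition \ref{prop1}. Writing the coefficient as $(1-\coth2t)(x_j-y_j)+(1-\tanh t)y_j$, the derivative $\partial_{y_i}$ falls either on $k_t$ or on the coefficient, and since $-(1-\coth2t)+(1-\tanh t)=\frac{1}{\sinh2t}$ it yields three terms: $(1-\coth2t)(x_j-y_j)\partial_{y_i}k_t$, $(1-\tanh t)y_j\partial_{y_i}k_t$ and $\frac{\delta_{ij}}{\sinh2t}k_t$. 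For $0<t\le1$ the factors $1-\coth2t$ and $\frac{1}{\sinh2t}$ are $O(t^{-1})$ while $1-\tanh t$ is bounded, so Lemma \ref{lem3}(ii), Lemma \ref{lem1}(iv) and Lemma \ref{lem1}(i) bound these three terms by $Ct^{-\gamma_\kappa-\frac d2-1}$ times the appropriate translated Gaussians; the extra $t^{-1/2}$ from $\frac{dt}{\sqrt t}$ yields the exponent $-\gamma_\kappa-\frac d2-\frac32$. The $t>1$ contribution is handled by parts 2(v)--(vi) of the lemmas and decays like $e^{-(2\gamma_\kappa+d)t}$.

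The decisive step is the integration over $A$. By the fundamental theorem of calculus, $|K_j(x,y)-K_j(x,y_0)|\le r\int_0^1|\nabla_yK_j(x,y_s)|\,ds$, and on $A$ the whole segment avoids $G.x$ since $\min_{g\in G}|g.x-y_s|\ge\rho(x)-r>\rho(x)/2>r$. Inserting the gradient bound and using (\ref{sq}), for every $\eta$ in the support of $\nu_{\sigma_\alpha.x}$ one has $Q(\eta):=|x|^2+|y_s|^2-2\langle y_s,\eta\rangle\ge(\rho(x)/2)^2$. The key device is the splitting $e^{-\frac ct Q}\le e^{-\frac{c}{2t}Q}\,e^{-\frac{c}{8t}\rho(x)^2}\le e^{-\frac{c}{2t}Q}\,e^{-\frac{cr^2}{2t}}$ valid on $A$, which extracts the $x$-independent factor $e^{-cr^2/2t}$ and leaves $\tau^\kappa_{\sigma_\alpha.x}(e^{-\frac c2|.|^2/t})(-y_s)$. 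Integrating the latter over all of $\mathbb{R}^d$, the invariance $\int_{\mathbb{R}^d}\tau^\kappa_{\sigma_\alpha.x}(\phi)(-y_s)\,d\mu_\kappa(x)=\int_{\mathbb{R}^d}\phi\,d\mu_\kappa$ (a consequence of (\ref{p}), (\ref{mg}) and (\ref{int})) together with the homogeneity of $\mu_\kappa$ produces $Ct^{\gamma_\kappa+\frac d2}$. Thus
\[
\int_A|\nabla_yK_j(x,y_s)|\,d\mu_\kappa(x)\le C\int_0^1 t^{-\gamma_\kappa-\frac d2-\frac32}\,e^{-\frac{cr^2}{2t}}\,t^{\gamma_\kappa+\frac d2}\,dt=C\int_0^1 t^{-\frac32}e^{-\frac{cr^2}{2t}}\,dt\le C\,r^{-1},
\]
the last estimate following from the substitution $u=r^2/t$. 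Multiplying by the prefactor $r$ gives the constant bound (\ref{K1}); the $t>1$ part is even easier, being integrable in $t$ and, on $A$, super-exponentially small in $r$ for large $r$, while uniformity in $s$ is clear since all estimates depend on $y_s$ only through $\min_{g\in G}|g.x-y_s|>\rho(x)/2$. I expect the main obstacle to be exactly this last mechanism: one must \emph{not} collapse the gradient bound to $\rho(x)^{-2\gamma_\kappa-d-1}$, for that quantity integrated against the merely doubling (non-Ahlfors) measure $\mu_\kappa$ would fail to be $O(r^{-1})$; instead one keeps the $\tau$-form, whereby the restriction to $A$ becomes the time cut-off $e^{-cr^2/t}$ and the $x$-integration is performed with the clean scaling of $\mu_\kappa$.
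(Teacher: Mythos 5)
Your proposal is correct in substance and its core is the paper's own strategy: split $K_j$ at $t=1$, control the $0<t\le 1$ piece by the mean value theorem with the gradient bound $\big|\partial_{y_i}h_t(x,y)\big|\le Ct^{-\gamma_\kappa-\frac d2-1}\sum_{\alpha\in\mathcal{R}_+}\tau^\kappa_{\sigma_\alpha.x}(e^{-\frac ct|.|^2})(-y)$ coming from Lemma \ref{lem1}$(i)$,$(iv)$ and Lemma \ref{lem3}$(ii)$, then integrate in $x$ over all of $\mathbb{R}^d$ using (\ref{p}), (\ref{mg}) and (\ref{int}). Where you diverge is the endgame: you exploit the restriction $\min_{g\in G}|g.x-y_\theta|>|y-y_0|$ by factoring the Gaussian, $e^{-\frac ctQ}\le e^{-\frac c{2t}Q}e^{-\frac{c}{2t}|y-y_0|^2}$, pulling out an $x$-independent time cut-off and reducing matters to $|y-y_0|\int_0^1t^{-3/2}e^{-\frac{c|y-y_0|^2}{2t}}dt\le C$; the paper instead dominates the radial profile by the shifted profile $e^{-\frac c{4t}(|.|+|y-y_0|)^2}$, integrates out $x$, and finishes in polar coordinates via $|y-y_0|\int_0^\infty(r+|y-y_0|)^{-2}dr=C$. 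These are equivalent computations — yours is slightly leaner, the paper's keeps the intermediate bound in the kernel-size form $(r+|y-y_0|)^{-2\gamma_\kappa-d-1}$ of Lemma \ref{lem4}. Your closing remark, that one must not collapse the gradient bound to $\rho(x)^{-2\gamma_\kappa-d-1}$ before integrating against $\mu_\kappa$, correctly identifies the obstruction both devices avoid. Also, your explicit reduction of (\ref{K2}) to (\ref{K1}), via $k_t(x,y)=k_t(y,x)$ and $\coth(2t)-\frac1{\sinh(2t)}=\tanh t$, is sound and more informative than the paper's bare assertion that ``the same argument'' applies.

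One slip to fix: your treatment of the $t>1$ part. You invoke parts 2$(v)$--$(vi)$ of Lemma \ref{lem1} for the gradient bound there, but those estimates control $k_t$ and $y_jk_t$, not $\partial_{y_i}k_t$, for $t>1$; moreover, with the MVT route the prefactor $|y-y_0|$ must be beaten for large $|y-y_0|$, which your ``super-exponentially small'' remark does handle via $Q\ge|y-y_0|^2$ on the region, but only once the missing derivative estimate is supplied. Such an estimate, $|\partial_{y_j}k^0_t(x,y)|\le Ce^{-dt}e^{-a|x-y|^2}$ for $t\ge1$, does follow from (\ref{part}) by the same manipulations as in Lemma \ref{01}, so this is a repairable omission rather than a fatal gap. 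The paper sidesteps it entirely: it never differentiates $K_j^{(2)}$, but shows $\int_{\mathbb{R}^d}|K_j^{(2)}(x,y)|\,d\mu_\kappa(x)\le C$ uniformly in $y$ (Lemma \ref{lem1}$(vi)$ with (\ref{p}) and (\ref{int})), so the difference $|K_j^{(2)}(x,y)-K_j^{(2)}(x,y_0)|$ is bounded trivially, with no smallness in $|y-y_0|$ needed. You should adopt that simpler route for the tail.
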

\begin{proof}
We will only show (\ref{K1}), the same  argument can be used  to  prove (\ref{K2}).
Let
\begin{eqnarray*}
h_t(x,y)&=&k_t(x,y)\Big((1-\coth 2t )x_j+ \frac{1}{\sinh 2t }y_j\Big)\\&=&
k_t(x,y)\Big(  (1-\coth 2t) (x_j-y_j)+(1-\tanh t)y_j\Big),\quad x,y\in \mathbb{R}^d,\; t\in]0,+\infty[.
\end{eqnarray*}
 In  view of (\ref{k_j}) we may write
\begin{eqnarray*}
 K_j(x,y)&=&\frac{1}{\sqrt{\pi}}\int_0^{1}h_t(x,y)\frac{dt}{\sqrt{t}}+\frac{1}{\sqrt{\pi}}\int_1^{+\infty}h_t(x,y)\frac{dt}{\sqrt{t}}.
\\&=& K_j^{(1)}(x,y)+K_j^{(2)}(x,y)
\end{eqnarray*}
 where $x, y\in \mathbb{R}^d$, $y\notin G.x$. We claim that $K_j^{(1)}$ and $K_j^{(2)}$ satisfy (\ref{K1}).
Making use of Lemma \ref{lem1}-$(vi)$, (\ref{p}) and  (\ref{int}) we have that
\begin{eqnarray*}
 \int_{\mathbb{R}^d}|K_j^{(2)}(x,y)|d\mu_\kappa(x)&\leq&
 C \int_{\mathbb{R}^d}\int_{1}^{+\infty}e^{-(2\gamma_\kappa+d)t}\tau^\kappa_{-y}(e^{-b|.|^2})(x)
\frac{dt}{\sqrt{t}}d\mu_\kappa(x) \\
&\leq&C \int_{1}^{+\infty}\int_{\mathbb{R}^d}e^{-(2\gamma_\kappa+d)t} e^{-b|z|^2}
d\mu_\kappa(z)\frac{dt}{\sqrt{t}}\leq C.
\end{eqnarray*}
Thus we get
\begin{eqnarray*}
 \int_{\min_{g\in G}|g.x-y|>2|y-y_0|}\Big| K_j^{(2)}(x,y)-K_j^{(2)}(x,y_0)\Big|d\mu_\kappa(x)\leq 2  \int_{\mathbb{R}^d}|K_j^{(2)}(x,y)d\mu_\kappa(x)\leq C.
\end{eqnarray*}
 In order to establish (\ref{K1}) for $K_j^2$ we need to estimate $\dfrac{\partial h_t}{\partial y_i}(x,y )$ for $0<t\leq1$,  which can easily be deduced from
Lemmas \ref{lem1}-$(iv)$ and  \ref{lem3}-$(ii)$. In fact,
$$\frac{\partial h_t}{\partial y_i}(x,y)=\left\{
    \begin{array}{ll}
       & \hbox{$\frac{\partial k_t}{\partial y_i}(x,y)\Big(  (1-\coth 2t) (x_j-y_j)+(1-\tanh t)y_j\Big)$,\; if $i\neq j$} \\
       & \hbox{$ \frac{\partial k_t}{\partial y_j}(x,y)\Big(  (1-\coth 2t) (x_j-y_j)+(1-\tanh t)y_j\Big)+\frac{1}{\sinh2t}k_t(x,y)$}
    \end{array}
  \right.
$$
 and then we obtain for some constant $c>0$,
$$\Big|\frac{\partial h_t}{\partial y_i}(x,y)\Big|\leq Ct^{-\gamma_\kappa-\frac{d }{2}-1}\;\sum_{\alpha\in \mathcal{R}_+}\tau^\kappa_{\sigma_\alpha.x}(e^{-\frac{c}{ t}|.|^2})(-y),\quad 0<t\leq1. $$
Now by mean  value  theorem,
\begin{eqnarray*}
\Big| K_j^{(1)}(x,y)-K_j^{(1)}(x,y_0)\Big|&=&  \frac{1}{\sqrt{\pi}} \int_{0}^{1}|h_t(x,y)-h_t(x,y_0)|\frac{dt}{\sqrt{t}}  \\
 &\leq & \frac{1}{\sqrt{\pi}}|y-y_0|\int_{0}^{1}\int_{0}^{1}\sum_{i=1}^d\Big|\frac{\partial h_t}{\partial y_i}(x,y_\theta)\Big|d\theta\frac{dt}{\sqrt{t}}\\
&\leq & C|y-y_0|\sum_{\alpha\in \mathcal{R}_+}\int_{0}^{1}\int_{0}^{1}t^{-\gamma_\kappa-\frac{d }{2}-\frac{3}{2}}\;\tau^\kappa_{\sigma_\alpha.x}(e^{-\frac{c}{ t}|.|^2})(-y_\theta)d\theta dt
\end{eqnarray*}
where $y_\theta=y_0+\theta(y-y_0)$. Observe that when   $\min_{g\in G}|g.x-y|>2|y-y_0|$ we have
  $$\min_{g\in G}|g.x-y_{\theta}|\geq \min_{g\in G}|g.x-y|- |y-y_\theta|>|y-y_0|.$$
This is an important fact, since  from
(\ref{trad}) and (\ref{sq}) we can write
$$\tau^\kappa_{\sigma_\alpha.x}(e^{-\frac{c}{ t}|.|^2})(-y_\theta )\leq  \tau^\kappa_{\sigma_\alpha.x}\Big(e^{-\frac{c}{ 4t}(|.|+|y-y_0|)^2}\Big)(-y_\theta ), \qquad\text{for all}\;\alpha\in \mathcal{R}_+.$$
Hence,  using (\ref{p}) and (\ref{int}) we get  \\\\
$\displaystyle{\int_{\min_{g\in G}|g.x-y|>2|y-y_0|}\Big| K_j^{(1)}(x,y)-K_j^{(1)}(x,y_0)\Big|d\mu_\kappa(x) }$\\
\begin{eqnarray*}
 &\leq & C|y-y_0|\sum_{\alpha\in \mathcal{R}_+}\int_{0}^{1}\int_{0}^{1}t^{-\gamma_\kappa-\frac{d }{2}-\frac{3}{2}}
\left(\int_{\mathbb{R}^d}\;\ \tau^\kappa_{-y_\theta}\Big(e^{-\frac{c}{ 4t}(|.| +|y-y_0|)^2}\Big)( \sigma_\alpha.x)
d\mu_\kappa(x)\right) \;dt\;d\theta
 \\&\leq&C|y-y_0|\int_{0}^{1}t^{-\gamma_\kappa-\frac{d }{2}-\frac{3}{2}}\int_{\mathbb{R}^d}e^{-\frac{c}{ 4t}(|z| +|y-y_0|)^2}d\mu_\kappa(z)\;dt\\
 &\leq &C|y-y_0|\int_{0}^{+\infty}r^{2\gamma_\kappa+d-1}\left(\int_{0}^{1}t^{-\gamma_\kappa-\frac{d }{2}-\frac{3}{2}}e^{-\frac{c }{4 t}(r+|y-y_0|)^2 } dt\right)\;dr
 \\ &\leq&C|y-y_0|\int_{0}^{+\infty}\frac{r^{2\gamma_\kappa+d-1}}{(r+|y-y_0|)^{2\gamma_\kappa+d+1}}\;dr\int_{0}^{+\infty}u^{-\gamma_\kappa-\frac{d }{2}-\frac{3}{2}}e^{-\frac{c}{ 4u} }\;du\\&\leq &C|y-y_0|\int_{0}^{+\infty}\frac{dr}{(r+|y-y_0|)^{2}}= C.
 \end{eqnarray*}
This finishes the proof of Proposition \ref{prop2} and concluded Theorem \ref{th2}.
\end{proof}

\end{document}